\title{Energy Stability of Explicit Runge-Kutta Methods for Non-autonomous or Nonlinear Problems}
\author{Hendrik Ranocha, David I. Ketcheson}
\date{June 4, 2020}
\declaretheoremstyle[
  bodyfont=\normalfont\itshape,
  headformat=\NAME\ \NUMBER\NOTE,
]{myplain}
\declaretheoremstyle[
  headformat=\NAME\ \NUMBER\NOTE,
]{mydefinition}
\newcommand{\envqed}{{\lower-0.3ex\hbox{$\triangleleft$}}}
\declaretheorem[style=myplain,numberwithin=section]{theorem}
\declaretheorem[style=myplain,numberlike=theorem]{proposition}
\declaretheorem[style=mydefinition,numberlike=theorem,qed=\envqed]{definition}
\declaretheorem[style=mydefinition,numberlike=theorem,qed=\envqed]{remark}
\declaretheorem[style=mydefinition,numberlike=theorem,qed=\envqed]{example}
\pgfplotsset{compat=1.13}
\newcommand\mynewtag[2]{#1\def\@currentlabel{#1}\label{#2}}
\DeclareDocumentCommand\rootedtree{o}{\Forest{rooted tree [#1]}}
\NewDocumentCommand{\mat}{mo}{%
  \IfValueTF{#2}{%
    \underline{\underline{#1}}{#2}
  }{%
    \underline{\underline{#1}}\,
  }%
}
\newcommand{\diag}{\operatorname{diag}}
\newcommand{\scp}[2]{\left\langle{#1,\, #2}\right\rangle}
\newcommand{\scps}[2]{\langle{#1,\, #2}\rangle}
\DeclarePairedDelimiterX\newset[1]\lbrace\rbrace{\setaux #1||\endsetaux}
\def\setaux#1|#2|#3\endsetaux{\if\relax\detokenize{#2}\relax #1 \else #1 \;\delimsize\vert\; #2 \fi}
\renewcommand{\set}[1]{\newset*{#1}}
\newcommand{\vect}[1]{\begin{pmatrix} #1 \end{pmatrix}}
\newcommand{\I}{\operatorname{I}}
\newcommand{\imax}{{i_\mathrm{max}}}
  \let\rho\varrho
  \let\phi\varphi
  \let\epsilon\varepsilon
\newcommand{\N}{\mathbb{N}}
\newcommand{\Z}{\mathbb{Z}}
\newcommand{\R}{\mathbb{R}}
\newcommand{\dt}{\Delta t}
\renewcommand{\H}{\mathcal{H}}
\renewcommand{\O}{\mathcal{O}}
\renewcommand{\i}{\mathrm{i}}
\newcommand{\normLip}[1]{\abs{#1}_{\mathrm{Lip}}}
\newcommand{\eg}[0]{{e.g.\@}\xspace}
\newcommand{\shortlong}[2]{#2}
\newcommand{\sep}{, }
\begin{document}

\maketitle

\begin{abstract}
  Many important initial value problems have the property that energy
is non-increasing in time. Energy stable methods, also referred
to as strongly stable methods, guarantee the same property discretely.
We investigate requirements for conditional energy stability of explicit
Runge-Kutta methods for nonlinear or non-autonomous problems.
We provide both necessary and sufficient conditions for energy stability
over these classes of problems.
Examples of conditionally energy stable schemes are constructed and an
example is given in which unconditional energy stability is obtained with an
explicit scheme.

  \medskip\noindent\textbf{Keywords.}
    Runge-Kutta methods\sep
    energy stability\sep
    strong stability\sep
    monotonicity\sep
    semiboundedness\sep
    dissipation\sep
    conservation

  \medskip\noindent\textbf{Mathematics Subject Classification (2010).}
    65L06\sep 
    65L20\sep 
    65M12\sep 
    65M20     
\end{abstract}

\section{Introduction}
\label{sec:introduction}

Ever since the construction of numerical methods for ordinary and
(time-dependent) partial differential equations (ODEs and PDEs, respectively),
their stability has been an important and active topic of research.
Monotonicity, meaning that the norm of the solution is bounded by its
initial value, is a particularly exacting stability property.
For equations, such as parabolic PDEs, that contain a significant amount of dissipation,
any reasonable numerical method will typically preserve monotonicity
under an appropriate time step restriction.
In contrast, for non-dissipative problems such as hyperbolic PDEs
(and their slightly dissipative semidiscretizations),
common time discretizations may not preserve monotonicity under any finite step size.

For nonlinear problems, such stability properties of numerical methods are
interesting and relevant in practice, \eg for computational fluid dynamics
\cite{rojas2019robustness}. But important applications are not restricted
to such nonlinear problems. Linear problems with time-dependent operators
occur for example if the passive transport of a tracer in a fluid is
considered. Other examples occur in the field of plasma physics when a
hybrid method is used and the motion of macro-particles is coupled in
an operator splitting approach to the evolution of the magnetic field,
which is determind as the solution of a linear hyperbolic PDE with
time-dependent coefficients \cite{koenders2015dynamical, ranocha2018numerical}.
Such problems are not only much more complex in terms of stability (as outlined
in this article) but also in terms of the asymptotic error growth
\cite{offner2019error}.

The energy method is an effective tool to get stability estimates, e.g. for
hyperbolic PDEs \cite{kreiss2004initial, gustafsson2008high}. Using summation
by parts operators \cite{svard2014review, fernandez2014review}, these can be
transferred efficiently to the semidiscrete level for many different kinds
of schemes \cite{kreiss1974finite, nordstrom2001finite, gassner2013skew,
ranocha2016summation}. However, applying the same approach in time yields
implicit methods \cite{nordstrom2013summation, boom2015high, ranocha2019some,
ranocha2020class, friedrich2019entropy}.  Classical nonlinearly stable methods,
such as algebraically stable Runge-Kutta methods, are also implicit.
For hyperbolic problems, such implicit methods are
usually less efficient than explicit ones.  It is possible to obtain
conditional energy stability with explicit methods by using modifications
that go outside the class of Runge-Kutta methods;
e.g. projection methods
\cite{hairer2006geometric, calvo2006preservation, calvo2010projection}
and relaxation Runge-Kutta schemes \cite{ketcheson2019relaxation,
ranocha2020relaxation, ranocha2020general, ranocha2020fully}.
Another possibility, studied in particular in the context of hyperbolic PDEs,
is to add artificial dissipation, spectral viscosity, or filtering
\cite{offner2018artificial, offner2020analysis, sun2019enforcing}. If these
modifications are implemented after a full Ruge-Kutta step, they belong to
the same general class of projection methods as relaxation and orthogonal
projection schemes.

Before trying to modify existing algorithms to prove stability results, it is
of course interesting to know whether these modifications are really necessary
to get stability or only for a proof thereof. Additionally, modifications of
explicit methods can make the algorithm more complicated and some versions
can even destroy other desired properties while trying to impose stability
\cite{offner2020analysis, ranocha2020relaxationHamiltonian}. Hence,
it is interesting to know what can be achieved within the class
of explicit Runge-Kutta methods without modifications. In this setting, results
have been obtained for problems that include a certain amount of dissipation
\cite{dahlquist1979generalized, higueras2005monotonicity}.
Recently this topic has again attracted the interest of researchers and
several results (using the term {\em strong stability}) for linear,
time-independent operators have been discovered
\cite{ranocha2018L2stability, sun2017stability, sun2019strong}. Nonlinear
problems have been investigated in \cite{ranocha2020strong}, where many
non-existence results for energy stable and strong stability preserving
(SSP) methods of order two and greater have been proved.
A first order accurate energy stable SSP method for autonomous problems
has also been discovered therein.

This article extends these previous works considerably by studying both
time-dependent linear and autonomous nonlinear problems. After introducing
the notation and reviewing some basic results in Section~\ref{sec:rk-methods},
the focus lies on time-dependent linear operators in Section~\ref{sec:linear}.
The main result, Theorem~\ref{thm:Lipschitz-impossible}, gives necessary
conditions for conditional energy stability in this
setting.  These conditions are not satisfied by any known
Runge-Kutta scheme we are aware of.  However, an example of
a scheme fulfilling these necessary conditions is given, and is proved to be
energy stable for a restricted class of relevant problems
(Proposition~\ref{pro:probbaly-stable-scheme-1}).

Next, autonomous nonlinear problems are studied in Section~\ref{sec:autonomous}.
The necessary conditions in this setting are, perhaps surprisingly, weaker than in the
non-autonomous linear case. These conditions are based on an expansion of
the change of energy \eqref{eq:estimate-RK-trees}, which is also used to
study sufficient conditions for energy stability. Based thereon,
we give a procedure for developing energy stable schemes,
and give examples of schemes of second and third order
(Theorem~\ref{thm:probably-stable-scheme} and
Example~\ref{ex:stable-rk_3_5-Mathematica}).

While most of the paper is devoted to the guarantee of stability over a
whole class of semibounded problems, in Section~\ref{sec:special-example} we ask
whether an explicit Runge-Kutta method can be unconditionally energy stable
for some specific problem.  We show that this is impossible if the problem is
linear, but -- surprisingly -- we give an example of unconditional stability
for a nonlinear problem.
Finally, in Section~\ref{sec:summary}, the results are summed up, open
questions are discussed, and directions of future research are outlined.

\section{Energy Evolution by Runge-Kutta Methods}
\label{sec:rk-methods}

Consider a time-dependent initial value problem
\begin{equation}
\label{eq:ode}
\begin{aligned}
  \od{}{t} u(t) &= f(t, u(t)),
  && t \in (0,T),
  \\
  u(0) &= u_0,
\end{aligned}
\end{equation}
in a real Hilbert space $\H$ with inner product $\scp{\cdot}{\cdot}$, inducing
the norm $\norm{\cdot}$.  We refer to $\norm{\cdot}^2$ as the energy.

\subsection{Energy Stability}

For a smooth solution of \eqref{eq:ode}, the time derivative of the energy is
\begin{equation}
  \od{}{t} \norm{u(t)}^2
  =
  2 \scp{u(t)}{\od{}{t} u(t)}
  =
  2 \scp{u(t)}{f(t, u(t))}.
\end{equation}

\begin{definition}
  The right hand side $f\colon [0,T] \times \H \to \H$ of \eqref{eq:ode} is
  \emph{semibounded}, if
  \begin{equation}
    \forall u \in \H, t \in [0,T]\colon
    \quad
    \scp{u}{f(t, u)} \leq 0.
  \end{equation}
  If $f$ is semibounded, the ODE \eqref{eq:ode} will sometimes also be called semibounded.
\end{definition}
\begin{remark}
  The results in this work extend to complex Hilbert spaces if one assumes that
  the real part of the inner product $\scp{u}{f(t, u)}$ is non-positive.
\end{remark}

Thus, the energy of any smooth solution of \eqref{eq:ode} is bounded by
its initial value if $f$ is semibounded. However, an approximate solution obtained
by a numerical method does not necessarily satisfy this inequality. For example,
applying one step of the explicit Euler method to \eqref{eq:ode} yields the new
value $u_+ = u_0 + \dt f(0, u_0)$, satisfying
\begin{equation}
  \norm{u_+}^2
  =
  \norm{u_0 + \dt f(0, u_0)}^2
  =
  \norm{u_0}^2
  + \underbrace{2 \dt \scp{u_0}{f(0, u_0)}}_{\leq 0}
  + \underbrace{\dt^2 \norm{f(0, u_0)}^2}_{\geq 0}.
\end{equation}
Thus, for a general semibounded $f$, the norm of the numerical solution can increase
during one time step, e.g. if $\scp{u_0}{f(0, u_0)} = 0$. In particular, this happens
if $f(t, u) = L(t) u$, where $L(t) \neq 0$ is a skew-symmetric operator.

\begin{definition}
  A one-step numerical scheme for approximating the solution of \eqref{eq:ode} is
  \emph{conditionally energy stable} with respect to a class
  ${\mathcal F}$ of semibounded problems if for each $f\in {\mathcal F}$ there exists
  $\dt_{\mathrm{max}}>0$ such that $\norm{u_+}^2 \leq \norm{u_0}^2$ for all $0 < \dt \le \dt_{\mathrm{max}}$.
\end{definition}
Here $\dt_{\mathrm{max}}$ may depend on $f$, $u_0$, and the method itself.
In the following we will consider the classes ${\mathcal F}$ of linear non-autonomous
and nonlinear autonomous problems.  We will often omit the word \emph{conditional}
for brevity.

\subsection{Runge-Kutta Methods}

A general (explicit or implicit) Runge-Kutta method with $s$ stages can be
described by its Butcher tableau \cite{hairer2008solving, butcher2016numerical}
\begin{equation}
\label{eq:butcher}
\begin{array}{c | c}
  c & A
  \\ \hline
    & b
\end{array}
\end{equation}
where $A \in \R^{s \times s}$ and $b, c \in \R^s$. For \eqref{eq:ode}, a step
from $u_0$ to $u_+$ is given by
\begin{equation}
\label{eq:RK-step}
  u_i
  =
  u_0 + \dt \sum_{j=1}^{s} a_{ij} \, f(c_j \dt, u_j),
  \qquad
  u_+
  =
  u_0 + \dt \sum_{i=1}^{s} b_{i} \, f(c_i \dt, u_i).
\end{equation}
Here, $u_i$ are the stage values of the Runge-Kutta method. It is also
possible to express the method via the slopes $k_i = f(c_i \dt, u_i)$.

Using the stage values $u_i$ as in \eqref{eq:RK-step},
the change in energy can be written
after some simplifications using the symmetry of the inner product as
\cite[equation (357e)]{butcher2016numerical}
\begin{multline}
\label{eq:estimate-RK}
  \norm{ u_+ }^2 - \norm{ u_0 }^2
  =
  2 \dt \sum_{i=1}^{s} b_{i} \scp{ u_i }{ f(c_i \dt, u_i) }
  \\
  + (\dt)^2 \left[
    \sum_{i,j=1}^{s} \left( b_i b_j - b_{i} \, a_{ij} - b_j a_{ji} \right)
      \scp{ f(c_i \dt, u_i) }{ f(c_j \dt, u_j) }
  \right].
\end{multline}
The first term on the right hand side is consistent with
$\int_{t_0}^{t_0 + \dt} 2 \scp{u(t)}{f(t, u(t))} \dif t$,
if the Runge-Kutta method is consistent, i.e. $\sum_{i=1}^{s} b_i = 1$.
Semiboundedness of $f$ implies that this term is non-positive if all $b_i$
are non-negative.

We recall the classical property of algebraic stability, which guarantees
energy stability for semibounded operators; cf.
\cite[section~357]{butcher2016numerical} and references cited therein.
\begin{definition}
  A Runge-Kutta method is \emph{algebraically stable} if $b_i \geq 0$ for all $i$
  and the matrix with entries $(b_i b_j - b_{i} \, a_{ij} - b_j a_{ji})_{i,j}$
  is negative semidefinite.
\end{definition}
Comparison with \eqref{eq:estimate-RK} shows that algebraically stable
Runge-Kutta methods are energy stable
for any time step size $\dt > 0$, cf. \cite[section~357]{butcher2016numerical}
and references cited therein. While there are Runge-Kutta methods with these nice
stability properties such as Gauß, Radau IA/IIA or Lobatto IIIC schemes, these
are necessarily implicit.

For linear and semibounded problems \eqref{eq:ode} with constant coefficients,
several results concerning the conditional energy stability of explicit Runge-Kutta methods
have been achieved \cite{tadmor2002semidiscrete, ranocha2018L2stability,
sun2017stability, sun2019strong} (note that the term \emph{conditional energy stability} herein
is precisely what is meant by \emph{strong stability} in the latter works).
Typically, conditional energy stability can be guaranteed
for problems $f(t, u) = L u$ in this class under a time step restriction of the
form $\dt \leq C \norm{L}^{-1}$, corresponding to a classical CFL criterion for
discretizations of hyperbolic conservation laws \cite{tadmor2002semidiscrete,
ranocha2018L2stability, sun2017stability, sun2019strong}.
Similar results have been obtained for some first order accurate schemes and
autonomous semibounded nonlinear problems \cite{ranocha2020strong}. In the latter setting,
the maximal time step is proportional to the inverse of the Lipschitz constant
of the nonlinear right-hand side $f$ of the ODE.

\section{Non-autonomous Linear Operators}
\label{sec:linear}

In this section, the special case of non-autonomous linear operators is studied.
Hence,
\begin{equation}
\label{eq:ode-linear}
\begin{aligned}
  \od{}{t} u(t) &= L(t) u(t),
  && t \in [0,T],
  \\
  u(0) &= u_0,
\end{aligned}
\end{equation}
is considered as special case of \eqref{eq:ode}.

To formulate our results, we need the following definitions regarding the
abscissae, or nodes, of a Runge-Kutta method.

\begin{definition}
\label{def:RK-nodes}
  \phantom{space}
  \begin{itemize}
    \item
    We say node $c_k$ of a Runge-Kutta method is \emph{distinct} if there is no
    other node $c_j=c_k$ such that $j\ne k$.

    \item
    A Runge-Kutta method is said to be \emph{non-confluent} if each of its
    nodes is distinct. Otherwise, it is called \emph{confluent}.

    \item
    We say the node $c_k$ of a Runge-Kutta method is a \emph{quadrature node}
    if $b_k\ne 0$.
  \end{itemize}
\end{definition}

\subsection{Main Result}

For linear problems with constant coefficients, some common and practical
Runge-Kutta methods have been shown to be conditionally energy stable
\cite{tadmor2002semidiscrete, ranocha2018L2stability, sun2017stability, sun2019strong}.
For linear problems with varying coefficients, energy stability is more difficult to attain.
Given almost any Runge-Kutta method, we can choose a non-autonomous problem \eqref{eq:ode-linear}
that makes the given method behave like Euler's method, leading to energy growth.
\begin{theorem}
\label{thm:Lipschitz-impossible}
  An explicit Runge-Kutta method with a distinct quadrature node cannot be energy
  stable for semibounded linear problems \eqref{eq:ode-linear} under a time
  step restriction depending only on an upper bound of $\norm{L(t)}$ and the
  Lipschitz constant of $t \mapsto L(t)$.
\end{theorem}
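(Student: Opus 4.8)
The plan is to exploit the explicit structure and the freedom in choosing a time-dependent operator to force the method to behave, at the unique quadrature node, exactly like one step of Euler's method applied to a skew-symmetric operator. Let $c_k$ be the node that is both unique and a quadrature node, so $b_k \neq 0$ and $c_j \neq c_k$ for all $j \neq k$, and set $\delta = \min_{j \neq k} \abs{c_j - c_k} > 0$. I would fix a nonzero skew-symmetric operator $S$ on $\H$ together with a vector $u_0$ satisfying $S u_0 \neq 0$ (available whenever $\dim \H \geq 2$) and search among problems of the form $L(t) = \psi(t)\, S$ with a scalar profile $\psi$. Since $\scp{u}{S u} = 0$, every such $L(t)$ is skew-symmetric, hence semibounded with $\scp{u}{L(t) u} = 0$ for all $t$ and $u$.

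The crucial observation is that for an explicit method the $i$th slope is $k_i = L(c_i \dt) u_i = \psi(c_i \dt)\, S u_i$, so it vanishes identically as soon as $\psi(c_i \dt) = 0$, regardless of the stage value $u_i$. I would therefore choose $\psi$ to vanish at every node $c_j \dt$ with $j \neq k$ while being nonzero at $c_k \dt$. Then $k_i = 0$ for all $i \neq k$; since $a_{kk}=0$ and $k_j = 0$ for $j < k$, the stage at the quadrature node collapses to $u_k = u_0$, and the only surviving slope is $k_k = \psi(c_k \dt)\, S u_0$. Hence $u_+ = u_0 + \dt\, b_k\, \psi(c_k \dt)\, S u_0$, precisely one Euler step for the operator $\psi(c_k \dt) S$. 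Substituting into \eqref{eq:estimate-RK} (or computing directly) annihilates the linear term by skew-symmetry and leaves
\[
  \norm{u_+}^2 - \norm{u_0}^2 = \dt^2\, b_k^2\, \psi(c_k \dt)^2\, \norm{S u_0}^2,
\]
which is strictly positive whenever $\psi(c_k \dt) \neq 0$.

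It remains to realize such a profile while respecting the bounds on which the admissible time step may depend. Here I would use a triangular bump: $\psi$ rises linearly with slope $\Lambda$ from zero at distance $\dt\delta$ on either side of $c_k \dt$ to a peak $\psi(c_k \dt) = \Lambda \dt \delta$, and vanishes outside that interval. By the choice of $\delta$ every other node lies at distance at least $\dt\delta$ from $c_k \dt$, so $\psi$ does vanish at each $c_j \dt$ with $j \neq k$; the profile has Lipschitz constant exactly $\Lambda$ and supremum $\Lambda \dt \delta$, so $L(t) = \psi(t) S$ has $\sup_t \norm{L(t)} = \Lambda \dt \delta$ and Lipschitz constant $\Lambda$. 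The energy increase is then $\dt^4\, b_k^2\, \Lambda^2 \delta^2\, \norm{S u_0}^2 > 0$. To close the argument I would suppose a stability threshold of the form $\dt_{\mathrm{max}} = g(M,\Lambda)$ with $g(M,\Lambda)>0$: fixing any $M,\Lambda > 0$ and picking $\dt \in (0, g(M,\Lambda)]$ small enough that $\Lambda \dt \delta \leq M$, the bump just built is a semibounded problem with $\sup_t \norm{L(t)} \leq M$ and Lipschitz constant $\leq \Lambda$ whose energy grows at the admissible step $\dt$, contradicting energy stability.

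The main obstacle, and the reason both hypotheses are needed, is the tension between forcing $\psi$ to vanish at the neighboring nodes yet stay nonzero at $c_k \dt$: the Lipschitz bound pins the peak to size $\O(\dt)$, so the energy increase is only $\O(\dt^4)$. The whole point is that this quantity, however small, remains strictly positive, so no threshold depending solely on $M$ and $\Lambda$ can exclude it. I would take care to record that $\delta > 0$ uses uniqueness of $c_k$ and $b_k \neq 0$ uses that $c_k$ is a quadrature node, and to note that the degenerate case $s = 1$ is simply Euler's method with a constant skew-symmetric $L$, where the conclusion is immediate.
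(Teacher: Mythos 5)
Your proposal is correct and follows essentially the same route as the paper's proof: choose $L(t) = \psi(t)S$ with a scalar profile vanishing at every node except the unique quadrature node $c_k$, so that explicitness collapses the step to an Euler step $u_+ = u_0 + \dt\, b_k \psi(c_k\dt) S u_0$ on a skew-symmetric operator, yielding strictly positive energy growth $\dt^2 b_k^2 \psi(c_k\dt)^2 \norm{Su_0}^2$. The only differences are cosmetic: the paper invokes Kirszbraun's theorem to extend $\lambda$ with arbitrarily small Lipschitz constant, whereas you build an explicit triangular bump and spell out the quantifier structure of the contradiction; both are fine, and your handling of $\delta>0$, $a_{kk}=0$, and the $s=1$ case is careful and accurate.
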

\begin{proof}
  Given $\dt > 0$, choose $u_0 \neq 0$,
  $L(t) = \lambda(t) \begin{psmallmatrix} 0 & -1 \\ 1 & 0 \end{psmallmatrix}$
  with $\lambda(c_k \dt) > 0$ sufficiently small and $\lambda(c_j) = 0$ for all
  $j \neq k$.
  By Kirszbraun's theorem \cite[Theorem~1.31]{schwartz1969nonlinear}, the
  function $\lambda$ can be continued as a Lipschitz continuous function with
  arbitrarily small Lipschitz constant depending on $\lambda(c_k \dt) > 0$.
  Hence, $\norm{L(t)}$ and the Lipschitz constant of $t \mapsto L(t)$ can be made
  arbitrarily small.

  Then, the first step of the given explicit Runge-Kutta method yields
  \begin{equation}
  \begin{aligned}
    u_+
    &=
    u_0 + \dt \sum_{i=1}^s b_i L(c_i \dt) u_i
    =
    u_0 + \dt b_k L(c_k \dt) u_k
    \\
    &=
    u_0 + \dt b_k L(c_k \dt) \left( u_0 + \dt \sum_{j=1}^{k-1} a_{kj} L(c_j \dt) u_j \right)
    =
    u_0 + \dt b_k L(c_k \dt) u_0.
  \end{aligned}
  \end{equation}
  This is equivalent to an explicit Euler step with time step $b_k \dt \neq 0$.
  Consider the matrix $L(c_k \dt)$.  Since this matrix (which is not identically
  zero) is skew-symmetric and injective,
  \begin{equation}
    \norm{u_+}^2 - \norm{u_0}^2
    =
    b_k^2 \dt^2 \norm{L(c_k \dt) u_0}^2
    > 0.
  \end{equation}
  Hence, the explicit Runge-Kutta method is not energy stable.
\end{proof}

The function $L$ appearing in the proof can be made arbitrarily smooth by
considering classical cut-off functions (Friedrichs mollifier).
The proof holds also for linear scalar problems in the complex plane if
$\begin{psmallmatrix} 0 & -1 \\ 1 & 0 \end{psmallmatrix}$ is replaced by
the imaginary unit $\i$.

\begin{remark}
\label{rem:methods-currently-used}
  In a non-confluent method, every node is distinct, so
  Theorem~\ref{thm:Lipschitz-impossible} implies that such methods cannot be energy
  stable.
  Moreover, it seems that all Runge-Kutta methods currently used in practice
  have at least one distinct quadrature node. The only schemes not covered by
  the theorem are those for which \emph{every quadrature node is repeated}.
  As far as we know, the only time integration schemes used in practice
  that can be reformulated as Runge-Kutta methods and use only repeated
  quadrature nodes are certain deferred correction methods. Whether these are energy
  stable or not requires further investigation.
\end{remark}

\begin{remark}
  The proof shows additionally that $a_{kk} \neq 0$ (taking $k$ as in the proof)
  is a necessary condition for an implicit Runge-Kutta scheme to be energy stable.
  In particular, the Lobatto~IIIA and IIIB schemes cannot be energy stable because
  they have a zero row or column in $A$ and only positive $b_i$.
\end{remark}

\begin{remark}
  The technique used in the proof of Theorem~\ref{thm:Lipschitz-impossible} cannot
  be extended to arbitrary confluent Runge-Kutta schemes. Indeed, consider the
  following Runge-Kutta method with nodes $c = (0, 1, 0)^T$
  for \eqref{eq:ode-linear}
  \begin{equation}
  \label{eq:counterexample-method-thm:Lipschitz-impossible}
  \begin{aligned}
    u_1 &= u_0,
    \\
    u_2 &= u_0 + \dt L(0) u_1,
    \\
    u_3 &= u_0 + \dt L(0) u_1 - \dt L(\dt) u_2,
    \\
    u_+ &= u_0 + \frac{\dt}{2} L(0) (- u_1 + 3 u_3).
  \end{aligned}
  \end{equation}
  If $L(t)$ is skew-symmetric, $L(0) \neq 0$, and $L(\dt) = 0$,
  \begin{equation}
    u_+
    =
    u_0 + \frac{\dt}{2} L(0) (- u_0 + 3 u_0 + 3 \dt L(0) u_0)
    =
    u_0 + \dt L(0) u_0 + \frac{3}{2} \dt^2 L(0)^2 u_0,
  \end{equation}
  and
  \begin{equation}
  \begin{aligned}
    \norm{u_+}^2 - \norm{u_0}^2
    \shortlong{}{&=
    2 \dt \scp{u_0}{L(0) u_0}
    + 3 \dt^2 \scp{u_0}{L(0)^2 u_0}
    + \dt^2 \norm{L(0) u_0}^2
    \\&\quad
    + 3 \dt^3 \scp{L(0) u_0}{L(0)^2 u_0}
    + \frac{9}{4} \dt^4 \norm{L(0)^2 u_0}^2
    \\}
    &=
    - 2 \dt^2 \norm{L(0) u_0}^2 + \frac{9}{4} \dt^4 \norm{L(0)^2 u_0}^2.
  \end{aligned}
  \end{equation}
  If $\dt$ is sufficiently small, $\norm{u_+}^2 - \norm{u_0}^2 < 0$.

  Similarly, if $L(t)$ is skew-symmetric, $L(0) = 0$, and $L(\dt) \neq 0$,
  $u_+ = u_0$ and the first time step is energy stable.

  However, this does not mean that the scheme
  \eqref{eq:counterexample-method-thm:Lipschitz-impossible}
  is energy stable for semibounded operators. Indeed, its stability function
  \begin{equation}
    \phi(z)
    =
    \frac{\det(\I - z A + z e b^T)}{\det(\I - z A)}
    =
    1 + z - \frac{3}{2} z^3
  \end{equation}
  satisfies $\abs{\phi(\i y)} > 1$ for $0 \neq y \in \R$. Hence, the scheme is not
  even stable for linear skew-symmetric operators with constant coefficients.
\end{remark}

\begin{remark}
  The proof of Theorem~\ref{thm:Lipschitz-impossible} above relies on the fact
  that if a Runge-Kutta method has a distinct node $c_k$,
  then the value of the corresponding stage derivative $L(c_k \dt)$ can be
  chosen independently of the values of all other stages. For methods
  with only duplicated nodes, the values of the stage derivatives become
  coupled and the problematic construction in the proof is precluded.
\end{remark}

The assumption of a distinct quadrature node used in
Theorem~\ref{thm:Lipschitz-impossible} may be necessary.
This conjecture is supported by the following result.
The proof of Theorem~\ref{thm:Lipschitz-impossible} relies on the
construction of a suitable operator $L$ such that
$L(t_1) L(t_2) = L(t_2) L(t_1)$ for all $t_1, t_2 \in [0,T]$ and
$\forall t \in [0,T]\colon L(t) = - L(t)^T$. For such operators, the following
confluent Runge-Kutta method is energy stable.
\begin{proposition}
\label{pro:probbaly-stable-scheme-1}
  The Runge-Kutta scheme with coefficients
  \begin{equation}
  \label{eq:probably-stable-scheme}
  \begin{aligned}
    \begin{array}{c|cccc}
    0 & & & &  \\
    1 & 1 & & & \\
    0 & 1 & -1 & & \\
    1 & -1 & 1 & 1 & \\
    \hline
    & \nicefrac{1}{4} & \nicefrac{1}{4} & \nicefrac{1}{4} & \nicefrac{1}{4} \\
    \end{array}
  \end{aligned}
  \end{equation}
  is conditionally energy stable for the class of linear non-autonomous ODEs
  \eqref{eq:ode-linear}
  where the operator $L$ is bounded and satisfies
  $\forall t_1, t_2 \in [0,T]\colon L(t_1) L(t_2) = L(t_2) L(t_1)$ and
  $\forall t \in [0,T]\colon L(t) = - L(t)^T$.
\end{proposition}
\begin{proof}
  Using $L_0 := L(0)$ and $L_1 := L(\dt)$, the change of the norm can be
  calculated explicitly for general $L$ as
  \begin{align*}
  \stepcounter{equation}\tag{\theequation}
  \label{eq:probably-stable-scheme-proof1}
    &
    \norm{u_+}^2 - \norm{u_0}^2
    =
    \dt \biggl(
        \scps{ u_0 }{ L_0 u_0 }
      + \scps{ u_0 }{ L_1 u_0 }
    \biggr)
    \\
    &\quad
    + \frac{1}{2} \dt^2 \biggl(
        \scps{ u_0 }{ L_0^2 u_0 }
      - \scps{ u_0 }{ L_0 L_1 u_0 }
      + \scps{ u_0 }{ L_1 L_0 u_0 }
      + \scps{ u_0 }{ L_1^2 u_0 }
      \\
      &\qquad\quad
      + \frac{1}{2} \norm{ L_0 u_0 }^2
      + \scps{ L_0 u_0 }{ L_1 u_0}
      + \frac{1}{2} \norm{ L_1 u_0 }^2
    \biggr)
    \\
    &\quad
    + \frac{1}{2} \dt^3 \biggl(
      - \scps{ u_0 }{ L_0 L_1 L_0 u_0 }
      + \scps{ u_0 }{ L_1 L_0^2 u_0 }
      - \scps{ u_0 }{ L_1 L_0 L_1 u_0 }
      + \scps{ u_0 }{ L_1^2 L_0 u_0 }
      \\
      &\qquad\quad
      + \frac{1}{2} \scps{ L_0 u_0 }{ L_0^2 u_0 }
      - \frac{1}{2} \scps{ L_0 u_0 }{ L_0 L_1 u_0 }
      + \frac{1}{2} \scps{ L_0 u_0 }{ L_1 L_0 u_0 }
      + \frac{1}{2} \scps{ L_0 u_0 }{ L_1^2 u_0 }
      \\
      &\qquad\quad
      + \frac{1}{2} \scps{ L_0^2 u_0 }{ L_1 u_0 }
      + \frac{1}{2} \scps{ L_1 u_0 }{ L_1 L_0 u_0 }
      - \frac{1}{2} \scps{ L_1 u_0 }{ L_0 L_1 u_0 }
      + \frac{1}{2} \scps{ L_1 u_0 }{ L_1^2 u_0 }
    \biggr)
    \\
    &\quad
    + \frac{1}{2} \dt^4 \biggl(
      - \scps{ u_0 }{ L_1 L_0 L_1 L_0 u_0 }
      - \frac{1}{2} \scps{ L_0 u_0 }{ L_0 L_1 L_0 u_0 }
      + \frac{1}{2} \scps{ L_0 u_0 }{ L_1 L_0^2 u_0 }
      \\
      &\qquad\quad
      - \frac{1}{2} \scps{ L_0 u_0 }{ L_1 L_0 L_1 u_0 }
      + \frac{1}{2} \scps{ L_0 u_0 }{ L_1^2 L_0 u_0 }
      + \frac{1}{8} \norm{L_0^2 u_0 }^2
      - \frac{1}{4} \scps{ L_0^2 u_0 }{ L_0 L_1 u_0 }
      \\
      &\qquad\quad
      + \frac{1}{4} \scps{ L_0^2 u_0 }{ L_1 L_0 u_0 }
      + \frac{1}{4} \scps{ L_0^2 u_0 }{ L_1^2 u_0 }
      + \frac{1}{2} \scps{ L_1 u_0 }{ L_1 L_0^2 u_0 }
      \\
      &\qquad\quad
      - \frac{1}{4} \scps{ L_0 L_1 u_0 }{ L_1 L_0 u_0 }
      + \frac{1}{8} \norm{ L_1 L_0 u_0 }^2
      + \frac{1}{4} \scps{ L_1 L_0 u_0 }{ L_1^2 u_0 }
      \\
      &\qquad\quad
      - \frac{1}{2} \scps{ L_1 u_0 }{ L_0 L_1 L_0 u_0 }
      + \frac{1}{2} \scps{ L_1 u_0 }{ L_1^2 L_0 u_0 }
      - \frac{1}{2} \scps{ L_1 u_0 }{ L_1 L_0 L_1 u_0 }
      \\
      &\qquad\quad
      + \frac{1}{8} \norm{ L_0 L_1 u_0 }^2
      - \frac{1}{4} \scps{ L_0 L_1 u_0 }{ L_1^2 u_0 }
      + \frac{1}{8} \norm{ L_1^2 u_0 }^2
    \biggr)
    + \O( \dt^5 ).
  \end{align*}
  Inserting the assumptions on $L$ (skew-symmetry of $L_{0,1}$ and
  $L_0 L_1 = L_1 L_0$), this equation reduces to (see Appendix~\ref{app:details})
  \begin{align*}
  \stepcounter{equation}\tag{\theequation}
  \label{eq:probably-stable-scheme-proof2}
    \norm{u_+}^2 - \norm{u_0}^2
    &=
    - \frac{1}{4} \norm{L_0 u_0 - L_1 u_0}^2 \dt^2 
    \\
    &\quad
    + \frac{1}{16} \biggl(
          \norm{L_0^2 u_0 }^2
      - 6 \norm{ L_0 L_1 u_0 }^2
      +   \norm{ L_1^2 u_0 }^2
    \biggr) \dt^4 
    + \O( \dt^5 ).
  \end{align*}
  The coefficient multiplying $\dt^2$ is nonpositive.
  If this coefficient is negative, the energy is non-increasing if the
  time step $\dt$ is small enough.

  Otherwise, $L_0 u_0$ must be equal to $L_1 u_0$ and the coefficient multiplying $\dt^4$
  is $- \frac{1}{4} \norm{ L_0^2 u_0 }^2 \leq 0$.
  If this coefficient vanishes, all products of higher powers of $L_0, L_1$ and $u_0$
  must vanish, too, since $L_0^m L_1^n u_0 = L_0^{m+n} u_0$ for arbitrary
  $m, n \in \mathbb{N}$ and $L_0^{m+n} u_0 = 0$ for $m+n \ge 2$.
  Consequently, $\norm{u_+}^2 = \norm{u_0}^2$.
\end{proof}

\subsection{Extensions}

Using an additional technical assumption, the construction used to prove
Theorem~\ref{thm:Lipschitz-impossible} can be used to show that the
growth of the norm is unbounded.
\begin{theorem}
\label{thm:Lipschitz-growth}
  Let an explicit Runge-Kutta method be given. Suppose there exists a
  distinct quadrature node $c_k$ such that for all $j \ne k$
  the difference $c_j - c_k$ is not an integer.
  Then there exists a semibounded problem \eqref{eq:ode-linear} such that the
  numerical solution given by the method grows monotonically without bound.
\end{theorem}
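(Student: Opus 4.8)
The plan is to iterate the single-step construction from the proof of Theorem~\ref{thm:Lipschitz-impossible}: I would build one fixed semibounded operator $L$ on $[0,\infty)$ so that in every time step the given method reproduces the same energy-increasing explicit Euler step, and then show that the resulting geometric growth of $\norm{u_n}^2$ is monotone and unbounded. As before I take $L(t) = \lambda(t)\begin{psmallmatrix} 0 & -1 \\ 1 & 0 \end{psmallmatrix}$ with a scalar function $\lambda$, so that $L(t)$ is skew-symmetric, $\scp{u}{L(t)u} = 0$, and the problem is semibounded. Note that the hypothesis excludes $c_j - c_k = 0$ for $j \ne k$, so $c_k$ is in particular a unique quadrature node, matching the setting of Theorem~\ref{thm:Lipschitz-impossible}.

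The crucial observation is that in the $n$-th step, starting at $t_n = n\dt$, stage $j$ is evaluated at time $(n+c_j)\dt$. I want $\lambda$ to be positive at every point $(n+c_k)\dt$, $n \ge 0$, and to vanish at every point $(m+c_j)\dt$ with $j \ne k$, $m \ge 0$. These two requirements are compatible precisely because no such points coincide: $(n+c_k)\dt = (m+c_j)\dt$ would force $c_j - c_k = n-m \in \Z$, which the hypothesis excludes. Reducing times modulo $\dt$, I would view this on the circle $\R/\dt\Z$, on which the phase of node $c_k$ is distinct from that of every other node; since there are finitely many nodes, their phases are separated by a positive gap. I would then take $\lambda$ to be $\dt$-periodic, equal to a fixed bump of height $\lambda_0 > 0$ centred at the phase of $c_k$ and supported within that gap, so that $\lambda$ is bounded, smooth (by mollification, cf.\ the earlier remark), positive at the $c_k$-phases, and zero at all other node phases. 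By periodicity, $\lambda((n+c_k)\dt) = \lambda_0$ for every $n$.

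With this $L$, I would verify the recursion. Because $L$ vanishes at the evaluation times of all stages $j \ne k$, the explicit (strictly lower-triangular) structure yields $u_k = u_n$ in each step, exactly as in the proof of Theorem~\ref{thm:Lipschitz-impossible}, and hence
\begin{equation*}
  u_{n+1} = u_n + \dt\, b_k\, L((n+c_k)\dt)\, u_n = u_n + \dt\, b_k \lambda_0 \begin{psmallmatrix} 0 & -1 \\ 1 & 0 \end{psmallmatrix} u_n .
\end{equation*}
Since the matrix is skew-symmetric and norm-preserving, one Euler step gives $\norm{u_{n+1}}^2 = \norm{u_n}^2 + \dt^2 b_k^2 \lambda_0^2 \norm{u_n}^2 = (1 + \dt^2 b_k^2 \lambda_0^2)\norm{u_n}^2$. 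As $b_k \ne 0$ (quadrature node) and $\lambda_0 > 0$, the factor exceeds $1$, so $\norm{u_n}^2 = (1 + \dt^2 b_k^2 \lambda_0^2)^n \norm{u_0}^2$ is strictly increasing and diverges for any $u_0 \ne 0$.

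I expect the main obstacle to be the separation-and-construction step: making precise that the non-integer hypothesis is exactly what decouples the times where $L$ must be nonzero from those where it must vanish across all steps, and then exhibiting a single bounded periodic $\lambda$ realising both sets of values simultaneously. The reduction modulo $\dt$ onto the circle is the key device that turns the infinitely many constraints into one finite separation condition, handled by a single compactly supported bump. A minor point to state carefully is that \emph{unbounded} growth is understood over $t \in [0,\infty)$, or arbitrarily large $T$, since with $\dt$ fixed the number of steps must be allowed to grow.
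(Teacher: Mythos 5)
Your proposal is correct and follows essentially the same route as the paper: iterate the construction from Theorem~\ref{thm:Lipschitz-impossible} over all steps, using the hypothesis $c_j - c_k \notin \Z$ to ensure the quadrature-stage evaluation times $(n+c_k)\dt$ never collide with any other stage time $(m+c_j)\dt$, so every step reduces to the same skew-symmetric Euler step with growth factor $1 + b_k^2 \dt^2 \lambda_0^2 > 1$. Your $\dt$-periodic bump for $\lambda$ is simply a concrete realization of the simultaneous choice the paper's proof asserts more tersely, and your closing remark about growth being understood over arbitrarily many steps is a fair clarification rather than a deviation.
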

\begin{proof}
  The construction used in the proof of Theorem~\ref{thm:Lipschitz-impossible}
  can be applied to all steps of the given Runge-Kutta method simultaneously,
  since $\forall l \in \Z, j \neq k\colon c_k \neq c_j + l$. Hence,
  for every step from $u^{(n)}$ to $u^{(n+1)}$,
  \begin{equation}
    \norm{u^{(n+1)}}^2 - \norm{u^{(n)}}^2
    =
    b_k^2 \Delta t^2 \abs{\lambda(c_k \dt)}^2 \norm{u^{(n)}}^2
    >
    0.
  \end{equation}
  Therefore, the norms of the numerical solutions grow monotonically and without
  bounds.
\end{proof}

\begin{example}
  Besides SSPRK(10,4) of \cite{ketcheson2008highly},
  Theorem~\ref{thm:Lipschitz-growth} can be applied to the popular explicit
  strong stability preserving Runge-Kutta method SSPRK(3,3) of \cite{shu1988efficient}.
  For this scheme we have found other ODEs that appear to lead to unbounded growth
  of the energy;
  e.g. $L(t) = \sin(t) \begin{psmallmatrix} 0 & -1 \\ 1 & 0 \end{psmallmatrix}$
  and the example in Subsection~\ref{subsec:numerical-results-linear}.
\end{example}

Adapting a result of Burrage and Butcher~\cite{burrage1979stability}
slightly, energy stability for \eqref{eq:ode-linear} is equivalent to
algebraic stability for some schemes.
\begin{theorem}
\label{thm:Lipschitz-algebraic-stability}
  A non-confluent Runge-Kutta method (i.e. with distinct nodes $c_i$) that is energy stable for
  semibounded linear problems \eqref{eq:ode-linear} under a time step restriction
  depending on an upper bound of $\norm{L(t)}$ and the Lipschitz constant of
  $t \mapsto L(t)$ must be algebraically stable.
\end{theorem}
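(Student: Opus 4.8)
The plan is to prove the contrapositive in a sharpened form: assuming the method is energy stable (under a restriction depending only on $\sup_t\norm{L(t)}$ and the Lipschitz constant of $t\mapsto L(t)$), I would derive both defining properties of algebraic stability, namely $b_i\ge 0$ for every $i$ and negative semidefiniteness of the matrix $M$ with entries $M_{ij}=b_ib_j-b_ia_{ij}-b_ja_{ji}$. The decisive structural fact is non-confluence: since the nodes $c_i$ are distinct, the points $c_i\dt$ are distinct for every fixed $\dt>0$, so one may prescribe the values $L(c_i\dt)$ completely independently and interpolate by a Lipschitz-continuous (indeed smooth) operator-valued function, exactly as in the proof of Theorem~\ref{thm:Lipschitz-impossible}. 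Scaling all prescribed values by a small factor $\epsilon>0$ makes both $\sup_t\norm{L(t)}$ and the Lipschitz constant $\O(\epsilon)$; hence, fixing $\dt$ first and letting $\epsilon\to 0$, the hypothesized time-step restriction is eventually vacuous, so energy stability must hold for the constructed problem. Throughout it suffices to work in $\H=\R^2$, using a single fixed skew-symmetric generator $J$ and the symmetric operator $\I$.

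I would start from the exact energy identity \eqref{eq:estimate-RK}, which in the present setting reads
\[
  \norm{u_+}^2-\norm{u_0}^2
  = 2\dt\sum_i b_i\scps{u_i}{L(c_i\dt)u_i}
  + \dt^2\sum_{i,j}M_{ij}\scps{L(c_i\dt)u_i}{L(c_j\dt)u_j}.
\]
For the sign of the weights I activate a single node: take $L(c_k\dt)=-\epsilon\,\I$ and $L(c_j\dt)=0$ for $j\ne k$, extended by a nonpositive multiple of $\I$ so that semiboundedness holds for all $t$. The leading contribution is then $-2\dt\,b_k\epsilon\norm{u_0}^2$, which dominates the $\O(\dt^2)$ remainder for small $\dt$; energy stability forces it to be nonpositive, giving $b_k\ge 0$, and $k$ is arbitrary.

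For negative semidefiniteness I make every stage derivative skew-symmetric, setting $L(c_i\dt)=\ell_i J$ with arbitrary reals $\ell_i$; since $\scps{u}{Ju}=0$, semiboundedness holds automatically and the entire first sum vanishes identically. Replacing $u_i=u_0+\O(\dt)$ in the remaining $\O(\dt^2)$ term yields
\[
  \norm{u_+}^2-\norm{u_0}^2
  = \dt^2\norm{J u_0}^2\sum_{i,j}M_{ij}\,\ell_i\ell_j + \O(\dt^3),
\]
so that energy stability for small $\dt$ requires $\sum_{i,j}M_{ij}\ell_i\ell_j\le 0$. As $(\ell_1,\dots,\ell_s)$ ranges over all of $\R^s$ and $\norm{Ju_0}\ne 0$, this is exactly $M\preceq 0$, completing the verification of algebraic stability.

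The hard part will be the bookkeeping around the implicit stage equations and the order of quantifiers. For an implicit method the stages solve $u_i=u_0+\dt\sum_j a_{ij}L(c_j\dt)u_j$; with $L=\O(\epsilon)$ this system is uniquely solvable for small $\epsilon$ and gives $u_i=u_0+\O(\dt\epsilon)$, which is all that is needed to control the remainders. More delicate is that $\dt$ must be fixed before the amplitude $\epsilon$ is shrunk, so that the dominant term (first order in $\dt$ for the weights, second order for $M$) dictates the sign while the restriction, loosening as $\norm{L(t)}$ and the Lipschitz constant tend to zero, is eventually met. Finally, one must confirm that scalar multiples $\ell_i J$ of a single generator really realize every quadratic form $\sum_{i,j}M_{ij}\ell_i\ell_j$, so that the conclusion is full negative semidefiniteness rather than merely nonpositive diagonal entries; this is precisely where non-confluence is indispensable, as repeated nodes would couple the corresponding stage derivatives and preclude the independent choice of the $\ell_i$.
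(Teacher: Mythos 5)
Your proposal is correct and follows essentially the same route as the paper's proof (the adaptation of Burrage--Butcher): activating a single node with a dissipative operator to force $b_i \geq 0$, then taking $L(c_i\dt) = \epsilon\xi_i J$ with a skew-symmetric generator and $u_i = u_0 + \O(\epsilon\dt)$ to force $\sum_{i,j} M_{ij}\xi_i\xi_j \leq 0$, with non-confluence enabling the independent prescription at the distinct points $c_i\dt$ and the scaling by $\epsilon$ making $\norm{L(t)}$ and the Lipschitz constant arbitrarily small. The only cosmetic differences are your use of $-\epsilon\I$ on $\R^2$ where the paper uses a scalar problem, and your more explicit bookkeeping of the quantifier order and the implicit stage equations.
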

\shortlong{The proof of Theorem~\ref{thm:Lipschitz-algebraic-stability}
is more or less a repetition of a result of
Burrage and Butcher~\cite{burrage1979stability}, enhanced by noting
that the varying coefficient can be chosen such that $\norm{L(t)}$
and the Lipschitz constant of $t \mapsto L(t)$ are arbitrarily small.}{
\begin{proof}
  This proof is more or less a repetition of a result of
  Burrage and Butcher~\cite{burrage1979stability},  enhanced by noting
  that the varying coefficient can be chosen such that $\norm{L(t)}$
  and the Lipschitz constant of $t \mapsto L(t)$ are arbitrarily small.
  For completeness, the proof is given in the following.

  Choosing $L(t) = -\lambda(t) \in \R$, $\lambda(c_i \dt) = \epsilon$, and
  $\lambda(c_j \dt) = 0$ for $j \neq i$ with $\epsilon > 0$ sufficiently small,
  $L$ can be extended to a smooth mapping with arbitrarily small $\norm{L(t)}$
  and Lipschitz constant of $t \mapsto L(t)$. Hence, energy stability and
  \eqref{eq:estimate-RK} imply
  \begin{equation}
    \norm{u_+}^2 - \norm{u_0}^2
    =
    - 2 \epsilon \dt b_i \norm{u_i}^2 + \epsilon^2 \dt^2 (b_i^2 - 2 b_i a_{ii}) \norm{u_i}^2
    \leq
    0.
  \end{equation}
  For small $\epsilon \dt > 0$ and $u_0 \neq 0$, $u_i \neq 0$ and the second
  term is negligible. Hence, $b_i \geq 0$.

  Considering $L(t) = \lambda(t) \begin{psmallmatrix} 0 & -1 \\ 1 & 0 \end{psmallmatrix}$
  with $\lambda(t) \in \R$, \eqref{eq:estimate-RK} yields
  \begin{equation}
  \label{eq:estimate-RK-skew-sym}
    \norm{u_+}^2 - \norm{u_0}^2
    =
    \dt^2 \sum_{i,j=1}^s (b_i b_j - b_i a_{ij} - b_j a_{ji}) \lambda(c_i \dt) \lambda(c_j \dt) \scp{u_i}{u_j}.
  \end{equation}
  For $\lambda(c_i \dt) = \epsilon \xi_i$ with arbitrary $\xi \in \R^s$ and
  sufficiently small $\epsilon > 0$, $L$ can be extended to a smooth mapping with
  arbitrarily small $\norm{L(t)}$ and Lipschitz constant of $t \mapsto L(t)$.
  Since $u_i = u_0 + \O(\epsilon \dt)$, energy stability and \eqref{eq:estimate-RK-skew-sym}
  imply
  \begin{equation}
    \epsilon^2 \dt^2
    \sum_{i,j=1}^s (b_i b_j - b_i a_{ij} - b_j a_{ji}) \xi_i \xi_j \norm{u_0}^2
    + \O(\epsilon^3 \dt^3)
    \leq 0.
  \end{equation}
  Hence, the matrix with entries $(b_i b_j - b_i a_{ij} - b_j a_{ji})$ must be
  negative semidefinite.
\end{proof}}

Theorem~\ref{thm:Lipschitz-algebraic-stability} shows that conditional stability
for non-autonomous linear problems is equivalent to unconditional stability for
general nonlinear problems in the context of non-confluent Runge-Kutta methods
and ODEs with semibounded operators.

\shortlong{}{
\begin{remark}
  If a Runge-Kutta method satisfying the assumptions of
  Theorem~\ref{thm:Lipschitz-algebraic-stability} is irreducible in the sense of
  Dahlquist and Jeltsch, the weights $b_i$ must be strictly positive
  \cite{dahlquist1979generalized}.
\end{remark}}

\subsection{Numerical Results}
\label{subsec:numerical-results-linear}

The problems constructed in the proofs above are rather special and
perhaps not typical of applications.  The following example shows
that the (poor) behaviors suggested in the above theorems also occur
for a more natural problem.
Consider the linear advection equation
\begin{equation}
\label{eq:linear-advection}
\begin{aligned}
  \partial_t u(t,x) + \sin(t^2) \partial_x u(t,x) &= 0,
  && t \in (0,100), x \in [-1, 1],
  \\
  u(0,x) &= \sin(\pi x),
  && x \in [-1,1],
\end{aligned}
\end{equation}
with periodic boundary conditions. A finite difference
semidiscretization using the classical second order accurate central
stencil and 50 grid points results in a skew-symmetric ODE \eqref{eq:ode}.
The third order method SSPRK(3,3) of \cite{shu1988efficient} is given by the
Butcher tableau
\begin{equation}
\label{eq:SSPRK33}
\begin{aligned}
  \begin{array}{c|ccc}
  0 &  & &  \\
  1 & 1 & & \\
  \nicefrac{1}{2} & \nicefrac{1}{4} & \nicefrac{1}{4} & \\
  \hline
  & \nicefrac{1}{6} & \nicefrac{1}{6} & \nicefrac{2}{3} \\
  \end{array}
\end{aligned}
\end{equation}
For this method, the time step $\dt = 10^{-5}$
is approximately three orders of magnitude smaller than required for
energy stability of a corresponding constant coefficient problem
\cite{tadmor2002semidiscrete, ranocha2018L2stability}. However, the energy
$\norm{u(t)}^2 = \Delta x \sum_i u_i^2$ increases exponentially, as can
be seen in Figure~\ref{fig:linear_time_dep_advection}.
In contrast, the energy of the numerical approximation using the scheme
\eqref{eq:probably-stable-scheme} is decreasing, in accordance with
Proposition~\ref{pro:probbaly-stable-scheme-1}.

The methods are implemented using double precision numbers \texttt{Float64}
in the package \texttt{DifferentialEquations.jl}
\cite{rackauckas2017differentialequations} in Julia \cite{bezanson2017julia}.
The source code for these numerical experiments is available online
\cite{ranocha2019energyRepro}.

\begin{figure}
\centering
  \includegraphics[width=0.6\textwidth]{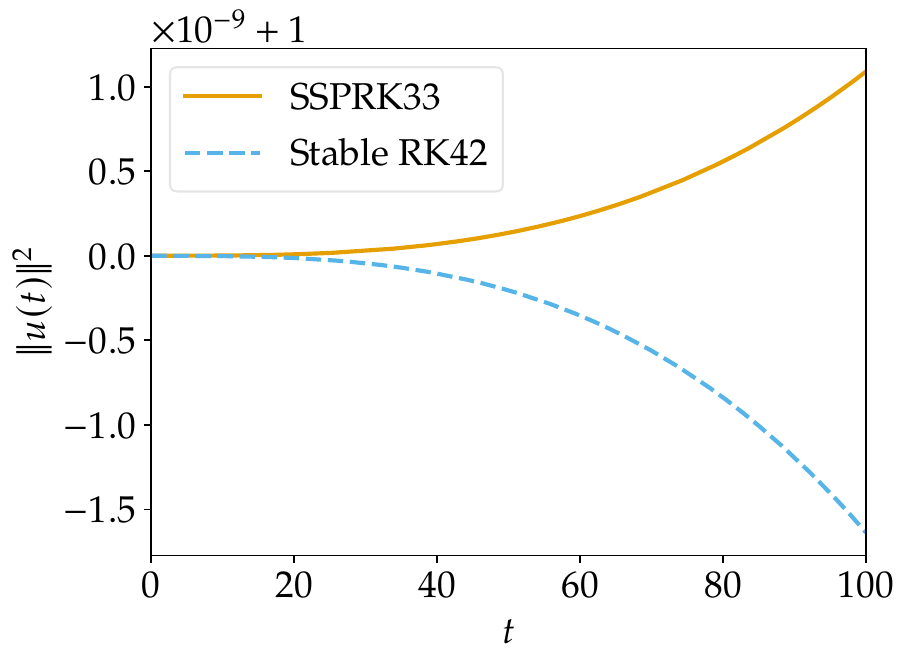}
  \caption{Discrete energies of the numerical solution of
           \eqref{eq:linear-advection} using a skew-symmetric finite
           difference operato with $\dt = 10^{-5}$.}
  \label{fig:linear_time_dep_advection}
\end{figure}

\section{Time-Independent Nonlinear Operators}
\label{sec:autonomous}

In this section, the special case of time-independent but possibly nonlinear
operators is studied. Hence,
\begin{equation}
\label{eq:ode-autonomous}
\begin{aligned}
  \od{}{t} u(t) &= f(u(t)),
  && t \in [0,T],
  \\
  u(0) &= u_0,
\end{aligned}
\end{equation}
is considered as special case of \eqref{eq:ode}.
The following result was obtained in \cite[Theorem~7.1]{ranocha2020strong}.
\begin{theorem}
\label{thm:first-order}
  There exist first order accurate explicit Runge-Kutta methods with two stages
  that are strong stability preserving and energy stable for the
  ODE \eqref{eq:ode-autonomous} with semibounded and Lipschitz continuous $f$
  with $\normLip{f} \leq L$ under a time step constraint
  $0 < \dt \le \dt_\mathrm{max} \propto L^{-1}$.
\end{theorem}
The technique used in the stability proof of Theorem~\ref{thm:first-order}
can be described as follows.
\begin{enumerate}
  \item
  Expand $\norm{u_+}^2 - \norm{u_0}^2$ as in \eqref{eq:estimate-RK}.

  \item
  Use the Lipschitz continuity of the right-hand side of \eqref{eq:ode-autonomous} in order
  to expand the $\dt^2$ term in a power series in $\dt$.

  \item
  Use the coefficients of the scheme and signs of the dominant terms in the power
  series to determine conditions for energy stability.
\end{enumerate}
The second step can be generalised to higher order schemes by considering analytic
right-hand sides $f$ and the expansion \cite[equation (313b)]{butcher2016numerical}
\begin{equation}
\label{eq:expansion-g(u_i)}
  f(u_i)
  =
  \sum_{\abs{t} \leq n} \dt^{\abs{t}-1} \frac{1}{\sigma(t)} (\Phi_i D)(t) \, F(t)(u_0)
  + \O(\dt^{n+1}).
\end{equation}
This is a sum over all trees $t$ of order $\abs{t} \leq n$, $\sigma(t)$ is the
symmetry of $t$, $(\Phi_i D)(t)$ is the $i$-th derivative weight of $t$, and
$F(t)(u_0)$ the elementary differential associated with the tree $t$ and the
right hand side $f$, evaluated at $u_0$.

Inserting \eqref{eq:expansion-g(u_i)} into \eqref{eq:estimate-RK} yields
\begin{multline}
\label{eq:estimate-RK-trees}
  \norm{u_+}^2 - \norm{u_0}^2
  =
  2 \dt \sum_{i=1}^{s} b_{i} \scp{ u_i }{ f(u_i) }
  \\
  + \sum_{\mathclap{\abs{t_1},\, \abs{t_2} \leq n}} \dt^{\abs{t_1} + \abs{t_2}}
    \sum_{i,j=1}^s (b_i b_j - b_{i} \, a_{ij} - b_j a_{ji})
                  \frac{ (\Phi_i D)(t_1) \, (\Phi_j D)(t_2)}{ \sigma(t_1) \sigma(t_2) }
    \scp{ F(t_1)(u_0) }{ F(t_2)(u_0) }
  \\
  + \O(\dt^{n+3}).
\end{multline}

\begin{example}
\label{ex:SSPRK33-expansion}
  For the three stage, third order method SSPRK(3,3) of \cite{shu1988efficient}
  given by the Butcher coefficients \eqref{eq:SSPRK33},
  the first terms of the sum over trees in \eqref{eq:estimate-RK-trees} are
  \begin{multline}
  \label{eq:SSPRK33-expansion}
    \dt^4 \left[
      \frac{1}{6} \scp{ F(\rootedtree[])(u_0) }{ F(\rootedtree[[*]])(u_0) }
      - \frac{1}{12} \scp{ F(\rootedtree[])(u_0) }{ F(\rootedtree[*,*])(u_0) }
      + \frac{1}{12} \norm{ F(\rootedtree[*])(u_0) }^2
    \right]
    + \O(\dt^5)
    \\
    =
    \dt^4 \left[
      \frac{1}{6} \scp{ f }{ f' f' f }
      - \frac{1}{12} \scp{ f }{ f''(f,f) }
      + \frac{1}{12} \norm{ f' f }^2
    \right]
    + \O(\dt^5).
  \end{multline}
  The calculations leading to this expression are available in a Jupyter
  notebook online \cite{ranocha2019energyRepro}. As an example, the coefficient
  $\nicefrac{1}{6}$ of the first term multiplied by $\dt^4$ can be obtained
  inserting the definition of the derivative weights
  \cite[Definition~312A]{butcher2016numerical} and the symmetry
  \cite[Table~301(I)]{butcher2016numerical} as
  \begin{equation}
  \begin{aligned}
    &
    2 \sum_{i,j=1}^s (b_i b_j - b_{i} \, a_{ij} - b_j a_{ji})
                  \frac{ (\Phi_i D)(\rootedtree[]) \, (\Phi_j D)(\rootedtree[[*]])}{ \sigma(\rootedtree[]) \sigma(\rootedtree[[*]]) }
    \\
    =&
    2 \sum_{i,j=1}^s (b_i b_j - b_{i} \, a_{ij} - b_j a_{ji})
                  \frac{ 1 \cdot \sum_{k,l=1}^s a_{jk} a_{kl} }{ 1 \cdot 1 }
  \end{aligned}
  \end{equation}
  Here, the factor $2$ arises because of the symmetry with respect to the
  order of the trees $\rootedtree[]$ and $\rootedtree[[*]]$.
  Inserting $\sum_{k,l=1}^s a_{jk} a_{kl} = \nicefrac{\delta_{j3}}{4}$,
  where $\delta_{j3}$ is the Kronecker delta, and evaluating the remaining
  sum yields
  \begin{equation}
    \frac{1}{2} \sum_{i}^s (b_i b_3 - b_{i} \, a_{i3} - b_3 a_{3i})
    =
    \frac{1}{2} \left( \frac{2}{3} - 0 - \frac{2}{3} \cdot \frac{1}{2} \right)
    =
    \frac{1}{6}.
  \end{equation}
  By a suitable choice of a non-dissipative right hand side $f$ such that
  $\forall u\colon \scp{u}{f(u)} = 0$, the leading order term in
  \eqref{eq:SSPRK33-expansion} can be made positive. Hence, the energy
  increases for every sufficiently small time step $\dt$.

  Of course, there are also problems for which SSPRK(3,3) is conditionally
  energy stable. For example, the right hand side $f\colon \R^2 \to \R^2$,
  $f(u) = \norm{u}^2 (-u_2, u_1)^T$, yields
  $- \frac{7}{12} \norm{u}^{10} + \O(\dt^5)$
  as first terms of the expansion \eqref{eq:estimate-RK-trees}.
\end{example}

\begin{example}
\label{ex:test-method-expansion}
  It is appealing to choose a Runge-Kutta method such that the coefficients in
  front of scalar products of elementary differentials whose sign cannot be
  controlled vanish and such that the coefficients multiplying non-negative
  terms such as $\norm{ f' f }^2$ are negative. However, it seems difficult
  to do this in a way that guarantees energy stability for all semibounded $f$.
  For example,
  \begin{equation}
  \begin{aligned}
    \begin{array}{c|ccc}
    0 & & &  \\
    \nicefrac{3}{8} & \nicefrac{3}{8} & & \\
    1 & -1 & 2 & \\
    \hline
    & \nicefrac{1}{22} & \nicefrac{8}{11} & \nicefrac{5}{22} \\
    \end{array}
  \end{aligned}
  \end{equation}
  represents a three stage, second order scheme. The corresponding first terms
  of the sum over trees in \eqref{eq:estimate-RK-trees} are
  \begin{align*}
  \stepcounter{equation}\tag{\theequation}
    &
      \dt^4 \biggl[
      - \frac{1}{11} \norm{ f' f }^2
    \biggr]
    + \dt^5 \biggl[
      - \frac{15}{176} \scp{ f' f }{ f' f' f }
      - \frac{49}{704} \scp{ f' f }{ f''(f, f) }
    \biggr]
    \\ &
    + \dt^6 \biggl[
      - \frac{45}{2816} \scp{ f' f }{ f' f''(f, f) }
      - \frac{15}{176} \scp{ f' f }{ f''(f, f' f) }
      - \frac{347}{16896} \scp{ f' f }{ f'''(f, f, f) }
      \\ &\quad\quad\quad
      + \frac{225}{7744} \norm{ f' f' f }^2
      + \frac{255}{30976} \scp{ f' f' f }{ f''(f, f) }
      - \frac{149}{30976} \norm{ f''(f, f) }^2
    \biggr]
    \\ &
    + \dt^7 \biggl[
      - \frac{45}{22528} \scp{ f' f }{ f''(f' f' f, f) }
      - \frac{45}{2816} \scp{ f' f }{ f''(f''(f, f), f) }
      \\ &\quad\quad\quad
      - \frac{45}{1408} \scp{ f' f }{ f''(f' f, f' f) }
      - \frac{15}{352} \scp{ f' f }{ f'''(f' f, f, f) }
      \\ &\quad\quad\quad
      - \frac{2641}{540672} \scp{ f' f }{ f''''(f, f, f, f) }
      + \frac{675}{61952} \scp{ f' f' f }{ f' f''(f, f) }
      \\ &\quad\quad\quad
      + \frac{225}{3872} \scp{ f' f ' f }{ f''(f' f, f) }
      + \frac{205}{22528} \scp{ f' f' f }{ f'''(f, f, f) }
      \\ &\quad\quad\quad
      + \frac{765}{495616} \scp{ f''(f, f) }{ f' f''(f, f) }
      + \frac{255}{30976} \scp{ f''(f, f) }{ f''(f' f, f) }
      \\ &\quad\quad\quad
      - \frac{1}{16896} \scp{ f''(f, f) }{ f'''(f, f, f) }
    \biggr]
    \\ &
    + \dt^8 \biggl[
      - \frac{135}{720896} \scp{ f' f }{ f' f''''(f, f, f, f) }
      - \frac{45}{22528} \scp{ f' f }{ f''(f'''(f, f, f), f) }
      \\ &\quad\quad\quad
      - \frac{135}{11264} \scp{ f' f }{ f''(f''(f, f), f' f) }
      - \frac{45}{5632} \scp{ f' f }{ f'''(f''(f, f), f, f) }
      \\ &\quad\quad\quad
      - \frac{45}{1408} \scp{ f' f }{ f'''(f' f, f' f, f) }
      - \frac{5}{352} \scp{ f' f }{ f''''(f' f, f, f, f) }
      \\ &\quad\quad\quad
      - \frac{-20723}{21626880} \scp{ f' f }{ f'''''(f, f, f, f, f) }
      + \frac{675}{495616} \scp{ f' f' f }{ f' f'''(f, f, f) }
      \\ &\quad\quad\quad
      + \frac{675}{61952} \scp{ f' f' f }{ f''(f''(f, f), f) }
      + \frac{675}{30976} \scp{ f' f' f }{ f''(f' f, f' f) }
      \\ &\quad\quad\quad
      + \frac{255}{7744} \scp{ f' f' f }{ f'''(f' f, f, f) }
      + \frac{22765}{7929856} \scp{ f' f' f }{f''''(f, f, f, f) }
      \\ &\quad\quad\quad
      + \frac{765}{3964928} \scp{ f''(f, f) }{ f' f'''(f, f, f) }
      + \frac{765}{495616} \scp{ f''(f, f) }{ f''(f''(f, f), f) }
      \\ &\quad\quad\quad
      + \frac{765}{247808} \scp{ f''(f, f) }{ f''(f' f, f' f) }
      + \frac{265}{61952} \scp{ f''(f, f) }{ f'''(f' f, f, f) }
      \\ &\quad\quad\quad
      + \frac{1667}{5947392} \scp{ f''(f, f) }{ f''''(f, f, f, f) }
      + \frac{2025}{1982464} \norm{ f' f''(f, f) }^2
      \\ &\quad\quad\quad
      + \frac{675}{61952} \scp{ f' f''(f, f) }{ f''(f' f, f) }
      + \frac{615}{360448} \scp{f' f''(f, f) }{ f'''(f, f, f) }
      \\ &\quad\quad\quad
      + \frac{225}{7744} \norm{ f''(f' f, f) }^2
      + \frac{205}{22528} \scp{ f''(f' f, f) }{ f'''(f, f, f) }
      \\ &\quad\quad\quad
      + \frac{1019}{1622016} \norm{ f'''(f, f, f) }^2
    \biggr]
    + \O(\dt^9).
  \end{align*}
  Again, the calculations leading to this expression are available in a Jupyter
  notebook online \cite{ranocha2019energyRepro}.
  If $\norm{ f' f } \neq 0$, the scheme is dissipative for sufficiently small
  time step $\dt > 0$.
  Similarly, if $\norm{ f' f } = 0$ and $\norm{ f''(f, f) } \neq 0$, the norm
  of the numerical solutions cannot increase for sufficiently small $\dt > 0$.
  If $f' f = 0$ and $f''(f, f) = 0$, the $\dt^7$ terms vanish additionally.
  However, the energy of the numerical solutions increases for arbitrarily
  small $\dt > 0$ if $f' f = 0$, $f''(f, f) = 0$ and $\norm{ f'''(f, f, f) } \neq 0$,
  since there is a positive term
  $\dt^8 \frac{1019}{1622016} \norm{ f'''(f, f, f) }^2 + \O(\dt^9)$
  and all other terms vanish.
\end{example}

\subsection{Necessary Conditions for Energy Stability}

The Examples~\ref{ex:SSPRK33-expansion} and \ref{ex:test-method-expansion}
demonstrate the importance of the bushy trees $\rootedtree[*]$,
$\rootedtree[*,*]$, $\rootedtree[*,*,*]$, $\dots$ with corresponding derivative
weights $c_i$, $c_i^2$, $c_i^3$, $\dots$ and elementary differentials $f' f$, $f''(f, f)$,
$f'''(f, f, f)$ etc. While it is known that the elementary differentials are linearly
independent for general right hand sides $f$ \cite[Section~314]{butcher2016numerical},
it is of interest to study the independence of these terms for semibounded $f$
and in particular for non-dissipative $f$.

In order to do that, the setting will be changed. Instead of considering
a Hilbert space, $\H$ is a real semi inner product space. The semi inner
product is still written as $\scp{\cdot}{\cdot}$ and $\norm{\cdot}$ denotes
the induced seminorm (i.e. nearly a norm but not necessarily definite). The
same definitions of energy stability etc. as for inner product spaces are
used. Semi inner products will be considered only in this subsection.

\begin{theorem}
\label{thm:elementary-differentials}
  For each $k \in \N$, there is an autonomous ODE with semibounded right hand side
  $f$ in a semi inner product space such that the elementary differentials evaluated
  at $u_0$ satisfy $f^{(k)}(f, \dots, f) \neq 0$ and $f^{(l)}(f, \dots, f) = 0$
  for all $l \neq k$.
  Additionally, $\scp{f}{f^{(k)}} = 0$ and $\forall u\colon \scp{u}{f(u)} = 0$.
\end{theorem}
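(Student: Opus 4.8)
The plan is to prove the theorem by exhibiting a single explicit autonomous example; the freedom gained by allowing a \emph{degenerate} seminorm is precisely what makes the four requirements compatible. I would work in $\R^3$ with coordinates $(p,q,z)$ and the semi inner product $\scps{u}{u'} = pp' + qq'$, whose null space (radical) is the $z$-axis. The governing observation is that a bushy elementary differential is an iterated directional derivative: writing $a := f(u_0)$, the elementary differential of the bushy tree with $l$ leaves is $f^{(l)}(f,\dots,f)(u_0) = f^{(l)}(u_0)[a,\dots,a] = \frac{\dif^l}{\dif s^l}\big|_{s=0} f(u_0 + s a)$. If $a$ happens to be a single coordinate direction, this collapses to an ordinary iterated partial derivative, which one can switch on at a prescribed order by a single monomial.

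Accordingly I would take $u_0 = (1,0,0)$ and
\begin{equation*}
  f(p,q,z) = \bigl( -z^k q,\ z^k p,\ 1 \bigr),
\end{equation*}
that is, a planar rotation in the ``seen'' $(p,q)$-plane with prefactor $z^k$, plus a constant drift in the radical direction $e_z$. Then $f(u_0) = (0,0,1) = e_z$ lies in the radical, so $a = e_z$ and the relevant directional derivative is simply $\partial_z$; restricting to the line $u_0 + s e_z$ gives $f(1,0,s) = (0, s^k, 1)$, a Taylor polynomial in $s$ with only a constant term and an $s^k$ term.

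The four conditions then follow by short computations. Non-dissipativity holds for every $u$ because the rotation is orthogonal to $u$ and the drift sits in the radical: $\scps{u}{f(u)} = p(-z^k q) + q(z^k p) = 0$, giving both semiboundedness and the required identity $\scps{u}{f(u)} = 0$. The bushy differentials are $f^{(l)}(f,\dots,f)(u_0) = \partial_z^l f(u_0)$, which equals $0$ for every $l \ne k$ and $k!\,e_q$ for $l = k$; since $e_q$ is \emph{not} in the radical, this surviving differential even has positive seminorm (the property actually needed in applications such as Example~\ref{ex:test-method-expansion}). Finally $\scps{f(u_0)}{f^{(k)}(f,\dots,f)(u_0)} = \scps{e_z}{k!\,e_q} = 0$ because $f(u_0) = e_z$ lies in the radical. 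The same $f$ works for every $k$ by varying the exponent.

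The difficulty here is conceptual rather than computational. The system of conditions on the bushy trees looks rigidly overdetermined, and the crux is to see that it decouples: placing $f(u_0)$ in the radical simultaneously (i)~linearizes the tree calculus into the single operator $\partial_z$, so that ``which order survives'' is governed entirely by the monomial degree, and (ii)~trivializes $\scps{f}{f^{(k)}}$; meanwhile the rotational structure enforces non-dissipativity for all $u$ without touching the tree structure, and the degeneracy of the seminorm keeps the $k$-th differential visible. Once this separation of roles is identified, the explicit formula and its verification are essentially forced.
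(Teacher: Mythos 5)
Your proposal is correct and takes essentially the same approach as the paper: the paper's proof uses $\R^3$ with $P = \diag(0,1,1)$, $u_0 = (0,1,0)^T$, and $f(u) = (1,0,0)^T + u_1^k (0, -u_3, u_2)^T$, which is your example up to a permutation of coordinates (your radical coordinate $z$ is the paper's $u_1$). Both arguments rest on the identical key idea of placing $f(u_0)$ in the radical of the semi inner product so that the bushy differentials collapse to iterated partial derivatives of the monomial prefactor, giving $\delta_{k,l}\,k!$ times a vector of positive seminorm, with $\scps{u}{f(u)} = 0$ enforced by the rotational structure.
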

\begin{proof}
  Consider the space $\R^3$ equipped with the semi inner product induced by the
  matrix $P = \diag(0, 1, 1)$, i.e. $\scp{u}{v}_P = u^T P v$. Consider the ODE
  \begin{equation}
    u'(t) = f(u(t)),
    \quad
    u(0) = \vect{0 \\ 1 \\ 0},
    \quad
    f(u)
    :=
    \vect{1 \\ 0 \\ 0}
    + u_1^k \vect{0 \\ -u_3 \\ u_2}.
  \end{equation}
  The right hand side $f$ satisfies $\forall u \in \R^3\colon \scp{u}{f(u)}_P = 0$
  and $f(u_0) = (1, 0, 0)^T \neq 0$.
  Hence, for $l \in \N$, the $i$-th component of the elementary differential
  $f^{(l)}(f, \dots, f)$ evaluated at $u_0$ is \cite[Definition~310A]{butcher2016numerical}
  \begin{equation}
    \left[ f^{(l)}(f, \dots, f) \right]^i
    =
    f^{i}_{j_1, \dots, j_l} f^{j_1} \dots f^{j_l}
    =
    f^{i}_{1, \dots, 1}
    \implies
    f^{(l)}(f, \dots, f)
    =
    \delta_{k,l} k! \vect{0 \\ 0 \\ 1}.
  \end{equation}
  Here, summation over repeated indices is implied, upper indices denote components,
  lower indices denote derivatives, and $\delta_{k,l}$ is the Kronecker delta.
\end{proof}

\begin{remark}
\label{rem:semi-inner-products}
  If semi inner products are allowed, problems depending explicitly on time
  have to be considered again. Indeed, the test problem constructed in the
  proof of Theorem~\ref{thm:elementary-differentials} is of this form since
  $u_1$ can be interpreted as time $t$ and the numerical approximation of
  $u_1$ is equal to $t$ if the classical condition $c_i = \sum_{j=1}^s a_{ij}$
  is satisfied.

  Hence, the results of section~\ref{sec:linear} can be applied, showing
  that there is nothing to gain if there is a $c_i$ distinct from the
  others with $b_i \neq 0$.
  Thus, it is interesting whether the independence of the elementary
  differentials for energy conservative right hand sides holds also
  in inner product spaces. Since this problem seems to be intractable with
  the current methods, it is left for future investigations.
\end{remark}

Theorem~\ref{thm:elementary-differentials} shows that the choice of elementary
differentials made in Example~\ref{ex:test-method-expansion} is possible. Hence, the method
mentioned there is not energy stable for general autonomous and semibounded
problems. The basic argument used there can be formulated as follows.
\begin{theorem}
\label{thm:strong-stability-impossible-basic}
  Consider a Runge-Kutta method with order of accuracy at least two.
  If there is a $k \in \N$ such that the Butcher coefficients satisfy
  \begin{equation}
  \label{eq:sign-condition}
    \sum_{i,j=1}^s (b_i b_j - b_{i} \, a_{ij} - b_j \, a_{ji}) c_i^k c_j^k > 0,
  \end{equation}
  then the method is not energy stable for general autonomous and semibounded
  ODEs in semi inner product spaces.
\end{theorem}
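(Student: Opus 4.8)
I would prove this by running the method on the tailored test problem from Theorem~\ref{thm:elementary-differentials} and reading off the leading term of the tree expansion \eqref{eq:estimate-RK-trees}. Fix the index $k$ for which \eqref{eq:sign-condition} holds, and let $f$ be the polynomial (hence analytic) right-hand side on $\R^3$ with the semi inner product induced by $P = \diag(0,1,1)$ built in the proof of Theorem~\ref{thm:elementary-differentials}; thus $f(u_0) = (1,0,0)^T$ lies in $\ker P$, the identity $\scp{u}{f(u)} = 0$ holds for all $u$, and $f^{(l)}(f,\dots,f)(u_0) = \delta_{k,l}\, k!\, (0,0,1)^T$. Because the consistency relation $\scp{u}{f(u)} = 0$ is exact, the first term $2 \dt \sum_i b_i \scp{u_i}{f(u_i)}$ of \eqref{eq:estimate-RK-trees} vanishes at every stage, so the whole change of energy comes from the sum over trees.

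The plan is then to show that the hypothesised sign condition appears in isolation at order $\dt^{2k+2}$. First I would note that a pair $(t_1, t_2)$ can contribute only if both $F(t_1)(u_0)$ and $F(t_2)(u_0)$ escape $\ker P$, since $\scp{\cdot}{\cdot}$ is blind to the first coordinate and $f(u_0)$ --- the elementary differential of the single-node tree --- lies in $\ker P$. A direct computation with the explicit $f$ gives $f^{(m)}(u_0) = 0$ as a full tensor for $1 \le m \le k-1$. Since the root of a tree of order $r$ has at most $r-1$ children, every tree of order at most $k$ either is the single node or factors through some $f^{(m)}(u_0)$ with $1 \le m \le k-1$; in both cases its elementary differential lies in $\ker P$. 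Hence any pair with $\abs{t_1} + \abs{t_2} \le 2k+1$ carries a factor in $\ker P$ and drops out.

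This leaves the order $\dt^{2k+2}$ term, where necessarily $\abs{t_1} = \abs{t_2} = k+1$ with both trees nonzero modulo $\ker P$. The only order-$(k+1)$ tree surviving is the bushy tree with $k$ leaves: reaching $f^{(k)}(u_0)$ forces the root to carry exactly $k$ children, hence all of them single nodes, and its elementary differential is $f^{(k)}(f,\dots,f)(u_0) = k!\,(0,0,1)^T$. Substituting the bushy derivative weight $c_i^k$, the symmetry $\sigma = k!$, and $\norm{F(\cdot)(u_0)}^2 = (k!)^2$ into \eqref{eq:estimate-RK-trees} collapses this term to
\[
  \dt^{2k+2} \sum_{i,j=1}^s (b_i b_j - b_i a_{ij} - b_j a_{ji}) c_i^k c_j^k ,
\]
that is, $\dt^{2k+2}$ times the positive left-hand side of \eqref{eq:sign-condition}. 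Truncating \eqref{eq:estimate-RK-trees} at $n = 2k$ makes the remainder $\O(\dt^{2k+3})$, so $\norm{u_+}^2 - \norm{u_0}^2 > 0$ for all sufficiently small $\dt > 0$, contradicting energy stability.

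The main obstacle I anticipate is precisely the bookkeeping in the second paragraph: one must be sure that no non-bushy tree of order up to $2k+1$ slips through the semi inner product, so that \eqref{eq:sign-condition} emerges uncontaminated at order $\dt^{2k+2}$. Here the construction of Theorem~\ref{thm:elementary-differentials} only controls the bushy differentials directly, so the extra input is the vanishing of the full tensors $f^{(m)}(u_0)$ for $m < k$, which is what rules out every other tree of low order. The remaining manipulations are routine substitutions of the known derivative weights and symmetry factors of the bushy trees.
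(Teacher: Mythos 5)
Your proposal is correct and takes essentially the same approach as the paper: the paper's proof likewise runs the method on the test problem of Theorem~\ref{thm:elementary-differentials} and isolates the bushy-tree pair at order $\dt^{2k+2}$ in \eqref{eq:estimate-RK-trees}, deferring the vanishing of all lower-order terms to ``a close inspection'' of that construction --- precisely the bookkeeping ($f^{(m)}(u_0)=0$ as a full tensor for $1\le m<k$, and $f(u_0)\in\ker P$) that you carry out explicitly. The only small deviation is that the paper uses the order-two hypothesis to annihilate the lowest-order pair of one-node trees via $\sum_{i,j}(b_ib_j-b_i a_{ij}-b_j a_{ji}) = 1-2\sum_i b_i c_i = 0$, whereas in your argument that term vanishes automatically because $\norm{f(u_0)}^2=0$ in the chosen semi inner product.
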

\begin{proof}
  Since the method is at least second order accurate, the lowest order term
  in the sum involving trees in \eqref{eq:estimate-RK-trees} vanishes since
  \begin{equation}
  \begin{aligned}
    \sum_{i,j=1}^s (b_i b_j - b_{i} \, a_{ij} - b_j a_{ji})
    (\Phi_i D)(\rootedtree[]) \, (\Phi_j D)(\rootedtree[])
    &=
    \sum_{i,j=1}^s (b_i b_j - b_{i} \, a_{ij} - b_j a_{ji})
    \\
    &=
    \left( \sum_{i=1}^s b_i \right)^2 - 2 \sum_{i=1}^s b_i c_i
    =
    1 - 2 \cdot \frac{1}{2}
    =
    0.
  \end{aligned}
  \end{equation}
  Because of Theorem~\ref{thm:elementary-differentials}, it is possible to
  choose $f$ such that the remaining terms all vanish except the one corresponding
  to the bushy tree with $k$ leaves.
  While this is not formulated directly there, a close inspection of the proof
  reveals that this is indeed true. For example, one can choose $f$ so that $f' = 0$
  and $\scp{ f }{ f' f''(f, f) } = 0$ while $f \neq 0$ and $f''(f, f) \neq 0$.

  By choosing such an $f$ all terms of order up to $\dt^{2k+1}$ vanish and
  \eqref{eq:estimate-RK-trees} takes the form
  \begin{equation}
    \norm{u_+}^2 - \norm{u_0}^2
    =
    \dt^{2k+2} \underbrace{\sum_{i,j=1}^s (b_i b_j - b_{i} \, a_{ij} - b_j a_{ji}) c_i^k c_j^k
    \norm{f^{(k)}(f, \dots, f)}^2}_{> 0}
    + \O(\dt^{2k+3}).
  \end{equation}
  Hence, $\norm{u_+}^2 > \norm{u_0}^2$ for arbitrarily small $\dt > 0$.
\end{proof}

Theorem~\ref{thm:strong-stability-impossible-basic} implies in particular that
$\sum_{i,j=1}^s (b_i b_j - b_{i} \, a_{ij} - b_j a_{ji}) c_i^k c_j^k$ must be
non-positive for B-stable methods. This is related to algebraically stable
schemes, where the matrix with entries $(b_i b_j - b_{i} \, a_{ij} - b_j a_{ji})$
is negative semidefinite. It can be proved that B-stable schemes are indeed
algebraically stable if certain additional (technical) assumptions are satisfied,
e.g. if the method is non-confluent \cite[Corollary~IV.12.14]{hairer2010solving}
or irreducible \cite[Theorem~IV.12.18]{hairer2010solving}.

\begin{example}
  For the B-stable but not algebraically stable reducible schemes with
  $A = \begin{pmatrix} 1/2 & 0 \\ 0 & 1/2 \end{pmatrix}$, $b = (2, -1)^T$
  \cite[page~80]{crouzeix1979stabilite} and
  $A = \begin{pmatrix} 1/2 & 0 \\ 1/4 & 1/4 \end{pmatrix}$, $b = (1/2, 1/2)^T$
  \cite[Table~IV.12.2]{hairer2010solving},
  the terms $\sum_{i,j=1}^s (b_i b_j - b_{i} \, a_{ij} - b_j a_{ji})
  (\Phi_i D)(t_1) \, (\Phi_j D)(t_2)$ vanish, exactly as for the implicit midpoint
  method to which these schemes can be reduced.
\end{example}

While explicit Runge-Kutta methods cannot be algebraically stable,
which would imply unconditional energy stability for all semibounded problems,
it is interesting to study whether
they can be stable for these problems under a suitable time step restriction.
Theorem~\ref{thm:first-order} proves that there are indeed conditionally
energy stable schemes of first order. For higher order schemes, it remains to
check the condition given in Theorem~\ref{thm:strong-stability-impossible-basic}.
While this can be done for every scheme given explicitly, there are some results
for general classes of schemes.
\begin{theorem}
\label{thm:sign-condition}
  Consider an explicit Runge-Kutta method. Assume that there is a unique $\imax$
  such that $\abs{c_\imax} = \max_i \abs{c_i}$ and that $b_\imax \neq 0$. Then
  there is a $k \in \N$ such that \eqref{eq:sign-condition} is satisfied.
  Hence, if the method is at least second order accurate, it is not energy
  stable for general autonomous and semibounded ODEs in semi inner product
  spaces.
\end{theorem}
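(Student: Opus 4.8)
The plan is to analyze the quadratic form in \eqref{eq:sign-condition} as the exponent $k$ grows and to show that it is eventually dominated by a single strictly positive term. Write $M_{ij} := b_i b_j - b_i a_{ij} - b_j a_{ji}$ and $S_k := \sum_{i,j=1}^s M_{ij}\, c_i^k c_j^k$, so that \eqref{eq:sign-condition} is precisely the assertion $S_k > 0$. Set $m := \imax$. First I would record the one structural fact that explicitness gives for free: since the method is explicit we have $a_{mm} = 0$, so the diagonal entry $M_{mm} = b_m^2 - 2 b_m a_{mm} = b_m^2$ is \emph{strictly positive}, using the hypothesis $b_\imax \neq 0$.

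Next I would observe that the uniqueness of $\imax$ forces $c_m \neq 0$. Indeed, if $\abs{c_m} = \max_i \abs{c_i}$ were zero, then every node would vanish and $\imax$ could not be the unique maximizer (the only exception being the trivial single-stage method, which is at most first order and therefore irrelevant to the stability conclusion). Having $c_m \neq 0$, I can factor out $c_m^{2k} = (c_m^2)^k > 0$ and write $S_k = c_m^{2k} \sum_{i,j=1}^s M_{ij} (c_i/c_m)^k (c_j/c_m)^k$.

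The heart of the argument is then a limit as $k \to \infty$. Because $m$ is the \emph{unique} maximizer of $\abs{c_i}$, the quantity $r := \max_{i \neq m} \abs{c_i/c_m}$ satisfies $r < 1$; hence $(c_i/c_m)^k \to 0$ for every $i \neq m$, while $(c_m/c_m)^k = 1$ for all $k$. Consequently every summand in which $i \neq m$ or $j \neq m$ is bounded in modulus by a constant times $r^k$ and tends to zero, and only the $(m,m)$ term survives, giving $\sum_{i,j=1}^s M_{ij} (c_i/c_m)^k (c_j/c_m)^k \to M_{mm} = b_m^2 > 0$. Therefore there is a $K$ such that this sum is positive for all $k \geq K$; since $c_m^{2k} > 0$, this yields $S_k > 0$, which is exactly \eqref{eq:sign-condition}. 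The concluding statement of the theorem then follows immediately from Theorem~\ref{thm:strong-stability-impossible-basic}.

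I expect the only delicate point to be the justification that the off-diagonal and mixed terms are genuinely negligible in the limit, which rests entirely on the strict inequality $r < 1$ supplied by the uniqueness of $\imax$; without uniqueness one would only obtain $r \le 1$ and the surviving contribution could mix several indices with no guaranteed sign. Everything else reduces to the single observation $M_{mm} = b_m^2 > 0$ that explicitness provides, so no heavy computation is required.
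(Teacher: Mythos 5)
Your proof is correct and follows essentially the same route as the paper: normalize the quadratic form by $c_\imax^{2k}$, let $k \to \infty$ so that only the $(\imax,\imax)$ term survives, and use explicitness ($a_{\imax\imax}=0$, i.e.\ $A$ strictly lower triangular) to conclude the limit is $b_\imax^2 > 0$ before invoking Theorem~\ref{thm:strong-stability-impossible-basic}. The paper phrases this in matrix form as $\bigl( b^T c^k \bigr)^2 - 2 \bigl( c^k \bigr)^T \diag(b) A c^k$ with $(c/c_\imax)^k \to e_\imax$, while you work entrywise with $M_{ij}$; your explicit remarks that $c_\imax \neq 0$ (excluding the trivial single-stage case) and that uniqueness gives the strict ratio bound $r<1$ are points the paper leaves implicit.
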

\begin{proof}
  The expression on the left hand side of \eqref{eq:sign-condition} can be written as
  \begin{equation}
    \bigl( b^T c^k \bigr)^2 - 2 \bigl( c^k \bigr)^T \diag(b) A c^k,
  \end{equation}
  where the exponentiation is performed componentwise. Using the given assumptions,
  \begin{equation}
    \left( \frac{c}{c_\imax} \right)^k \to e_\imax,
    \quad k \to \infty,
  \end{equation}
  where $e_\imax$ is the standard unit vector with components
  $(e_\imax)_j = \delta_{j,\imax}$. Since the Runge-Kutta scheme is explicit, $A$
  is a strictly lower triangular matrix and $e_\imax^T \diag(b) A e_\imax = 0$.
  Because of $b^T e_\imax = b_\imax \neq 0$,
  \begin{equation}
    \biggl( \frac{ b^T c^k }{ c_\imax } \biggr)^2
    - 2 \frac{ (c^k)^T }{ c_\imax^k } \diag(b) A \frac{ c^k }{ c_\imax^k }
    \to
    \bigl( b^T e_\imax \bigr)^2 - 2 e_\imax^T \diag(b) A e_\imax^k
    =
    b_\imax^2
    \neq 0,
  \end{equation}
  for $k \to \infty$. Hence, there is a $k \in \N$ such that \eqref{eq:sign-condition}
  is satisfied.
\end{proof}

\begin{remark}
  Theorem~\ref{thm:sign-condition} can also be applied to many confluent methods
  such as the ten-stage, fourth order, explicit strong stability preserving method
  SSPRK(10,4) of \cite{ketcheson2008highly}. Indeed, $\imax = 10$, $c_\imax = 1$,
  and $b_\imax = \frac{1}{10}$ in that case. That this scheme is not energy
  stable for autonomous and semibounded problems has also been proved using some
  specific counterexamples in \cite[Sections~4.3 and 6]{ranocha2020strong}.
\end{remark}

\begin{remark}
  The argument used to prove Theorem~\ref{thm:sign-condition} can also be applied
  to implicit Runge-Kutta methods with $a_{\imax,\imax} = 0$.
\end{remark}

\subsection{Sufficient Conditions for Energy Stability}

Theorem~\ref{thm:sign-condition} does not imply that all explicit
Runge-Kutta methods of order two or greater cannot be energy stable.
\begin{example}
\label{ex:sign-condition}
  The Runge-Kutta method with Butcher coefficients
  \begin{equation}
  \begin{aligned}
    \begin{array}{c|cccc}
    0 & & & &  \\
    1 & 1 & & & \\
    0 & 1 & -1 & & \\
    1 & -1 & 1 & 1 & \\
    \hline
    & \nicefrac{1}{4} & \nicefrac{1}{4} & \nicefrac{1}{4} & \nicefrac{1}{4} \\
    \end{array}
  \end{aligned}
  \end{equation}
  has four stages and is second order accurate. Since $c_2 = 1 = c_4$, it does
  not satisfy the assumptions of Theorem~\ref{thm:sign-condition}. Because
  $c_i \in \set{0, 1}$, we have $c^k = c$ for $k \in \N$. Hence, it suffices to consider
  $k = 1$ in \eqref{eq:sign-condition}, i.e.
  \begin{equation}
    \sum_{i,j=1}^s (b_i b_j - b_{i} \, a_{ij} - b_j \, a_{ji}) c_i c_j
    =
    - \frac{1}{4}
    <
    0.
  \end{equation}
  Thus, the sum in \eqref{eq:sign-condition} is negative for all $k \in \N$.
\end{example}

\begin{theorem}
\label{thm:probably-stable-scheme}
  The Runge-Kutta method given in Example~\ref{ex:sign-condition} is conditionally energy
  stable with respect to autonomous ODEs \eqref{eq:ode-autonomous} with analytical and
  semibounded right hand side.
\end{theorem}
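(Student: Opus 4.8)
The plan is to verify energy stability of the scheme from Example~\ref{ex:sign-condition} directly via the tree expansion \eqref{eq:estimate-RK-trees}, following the three-step technique outlined at the beginning of Section~\ref{sec:autonomous}. The key structural fact, already established in Example~\ref{ex:sign-condition}, is that the nodes satisfy $c_i \in \set{0,1}$, so that $c^k = c$ for every $k \in \N$ and the quadratic form $\sum_{i,j}(b_i b_j - b_i a_{ij} - b_j a_{ji}) c_i^k c_j^k$ collapses to the single value $-\nicefrac{1}{4}$. This means the dangerous bushy-tree contributions, whose signs cannot be controlled, all carry the \emph{same} strictly negative coefficient, which is exactly the feature that Theorem~\ref{thm:strong-stability-impossible-basic} flagged as the obstruction to stability and which is here avoided.

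First I would compute, using \eqref{eq:estimate-RK-trees}, the leading nonvanishing term in the expansion of $\norm{u_+}^2 - \norm{u_0}^2$ in powers of $\dt$. Because the method is second order, the $2\dt \sum_i b_i \scp{u_i}{f(u_i)}$ term is consistent with $\int 2\scp{u}{f(u)}\dif t \le 0$ by semiboundedness, but for the problematic non-dissipative case where $\scp{u}{f(u)} = 0$ one must look deeper. The goal is to show that the leading term is $-\frac{1}{4}\dt^4 \norm{f' f}^2$ (or more precisely a negative multiple of $\norm{\sum_k c_i^k \cdots}^2$-type quantity), so that it is strictly negative whenever the relevant elementary differential is nonzero. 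The crucial point is that, unlike in Example~\ref{ex:test-method-expansion}, the uniform coefficient $-\nicefrac14$ on all the bushy contributions prevents any single positive term such as $\norm{f'''(f,f,f)}^2$ from surviving in isolation at higher order with the wrong sign.

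The main obstacle will be the degenerate cascade: I must handle the case where the leading term vanishes because $f' f = 0$, then show the next term is again nonpositive, and iterate. The heart of the argument is to prove that \emph{whenever all lower-order terms vanish}, the next surviving term is a negative multiple of $\norm{f^{(k)}(f,\dots,f)}^2$ for the appropriate $k$, using the fact that $c^k = c$ forces every bushy-tree coefficient to equal the same $-\nicefrac14$. I would argue that the terms mixing different elementary differentials either vanish by the coefficient structure or are dominated, so that the expansion remains sign-definite termwise; this likely requires care in organizing the sum over pairs $(t_1, t_2)$ of trees and showing that the off-diagonal cross terms cannot spoil negativity. Once one establishes that the first nonvanishing term is always strictly negative (or that $\norm{u_+}^2 = \norm{u_0}^2$ when all elementary differentials vanish), analyticity of $f$ guarantees a convergent expansion, and hence a $\dt_{\max} > 0$ below which $\norm{u_+}^2 \le \norm{u_0}^2$, completing the proof.
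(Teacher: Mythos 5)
Your overall skeleton matches the paper's proof (expand via \eqref{eq:estimate-RK-trees}, exploit $c^k=c$ so every bushy-tree diagonal coefficient equals $-\nicefrac{1}{4}$, then run a cascade over the first nonvanishing $f^{(k)}(f,\dots,f)$), but there is a genuine gap at exactly the point you defer with ``this likely requires care in organizing the sum over pairs $(t_1,t_2)$.'' The missing ingredient is the identity
\begin{equation*}
  \forall j\colon\quad b_j - \sum_{i=1}^s b_i a_{ij} - b_j c_j = 0,
\end{equation*}
which holds for the scheme of Example~\ref{ex:sign-condition} and which the paper verifies explicitly. This identity makes the coefficient of \emph{every} pairing of the single-vertex tree with any other tree $t_2$ vanish, i.e.\ it removes all terms $\scp{f}{F(t_2)(u_0)}$ from the quadratic part of \eqref{eq:estimate-RK-trees}. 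Without it your cascade breaks down before the critical order is reached: under the hypothesis $f'f = \dots = f^{(k-1)}(f,\dots,f) = 0$, $f^{(k)}(f,\dots,f)\neq 0$, multilinearity does kill every elementary differential built on an inner bushy factor of order $<k$, but it does \emph{not} kill $\scp{f}{f^{(k)}(f,\dots,f)}$, which enters at order $\dt^{k+2}$ --- strictly below the negative diagonal term $-\tfrac{1}{4}\dt^{2k+2}\norm{f^{(k)}(f,\dots,f)}^2/(k!)^2$ --- nor terms such as $\scp{f}{f^{(k)}(f^{(k)}(f,\dots,f),f,\dots,f)}$ at order $\dt^{2k+2}$ itself. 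These have uncontrolled sign (by Theorem~\ref{thm:elementary-differentials} one can only arrange favorable vanishing for specific $f$, not for all semibounded $f$), so uniform negativity of the bushy diagonal alone cannot save the argument; the failure mode of Example~\ref{ex:test-method-expansion} is avoided here not merely by the $-\nicefrac{1}{4}$ collapse but by this structural orthogonality to $f$.

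Once that identity is in hand, the remaining bookkeeping you worry about becomes routine rather than delicate: at order $\dt^{2k+2}$ the only surviving pairing is the diagonal bushy one (all others contain a factor $F(t)(u_0)=0$ for $2\le\abs{t}\le k$ or a non-bushy tree of order $k+1$, again zero by multilinearity), so no dominance argument among competing same-order cross terms is needed --- the single negative term dominates only the $\O(\dt^{2k+3})$ remainder, which analyticity controls. You should also note that the $\O(\dt)$ term is handled by $b_i\ge 0$ together with semiboundedness (all $b_i=\nicefrac14$ here), a hypothesis your proposal uses implicitly but never states as a requirement on the weights. As written, your proposal promises the theorem's key mechanism rather than supplying it, and the promised ``coefficient structure'' must be the displayed identity; absent its identification and verification, the step ``the next surviving term is a negative multiple of $\norm{f^{(k)}(f,\dots,f)}^2$'' is false in general.
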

\begin{proof}
  Since the right hand side $f$ is analytical, the energy difference after one
  time step can be expanded as in \eqref{eq:estimate-RK-trees}. Because of the
  semiboundedness of $f$, the term proportional to $\dt$ is non-positive.

  There are no inner products between $f = F(\rootedtree[])$ and higher
  order elementary differentials in the remaining terms because
  \begin{equation}
    \sum_{i,j=1}^s (b_i b_j - b_i a_{ij} - b_j a_{ji})
      \frac{(\Phi_i D)(\rootedtree[]) \, (\Phi_j D)(t_2)}{\sigma(\rootedtree[]) \, \sigma(t_2)}
    =
    \sum_{j=1}^s
      \underbrace{\biggl( b_j - \sum_{i=1}^s b_i a_{ij} - b_j c_j \biggr)}_{= 0}
      \frac{(\Phi_j D)(t_2)}{\sigma(t_2)},
  \end{equation}
  which can be computed explicitly.
  The first remaining terms are of the form
  \begin{equation}
  \begin{aligned}
    &
    - \frac{1}{4} \dt^4 \norm{f' f}^2
    + \dt^5 \biggl[
      - \frac{1}{2} \scp{ f' f }{ f' f' f }
      - \frac{1}{4} \scp{ f' f }{ f''(f, f) }
    \biggr]
    \\&
    + \dt^6 \biggl[
        \frac{1}{4} \scp{ f' f }{ f' f' f' f }
      - \frac{1}{4} \scp{ f' f }{ f' f''(f, f) }
      - \frac{1}{4} \scp{ f' f }{ f''(f' f, f) }
      \\&\qquad\quad
      - \frac{1}{12} \scp{ f' f }{ f'''(f, f, f) }
      + \frac{1}{2} \norm{ f' f' f }^2
      - \frac{1}{4} \scp{ f' f' f }{ f''(f, f) }
      - \frac{1}{16} \norm{ f''(f, f) }^2
    \biggr].
  \end{aligned}
  \end{equation}
  If $f' f \neq 0$, the $\dt^4$ term dominates the other ones for sufficiently
  small $\dt > 0$ and the scheme is energy stable.
  If $f' f = 0$ and $f''(f, f) \neq 0$, the $\dt^4$, $\dt^5$, and most of the
  $\dt^6$ terms vanish. Only $- \frac{1}{16} \dt^6 \norm{ f''(f, f) } < 0$ remains
  and dominates higher order terms, resulting in a stable scheme for small
  $\dt > 0$.

  This argument can be applied similarly to all other terms. Suppose that
  $f' f = \dots = f^{(k-1)}(f, \dots, f) = 0$ and $f^{(k)}(f, \dots, f) \neq 0$.
  The terms up to (and including) $\O(\dt^{2k+1})$ vanish,
  since (as described at the beginning of the proof) for this method,
  the series \eqref{eq:estimate-RK-trees} does not include any terms
  involving an inner product of $f$ and other elementary differentials.
  Most of the $\dt^{2k+2}$ terms
  vanish too, except the one proportional to $\norm{ f^{(k)}(f, \dots, f) }$.
  Because $\sum_{i,j=1}^s (b_i b_j - b_i a_{ij} - b_j a_{ji}) c_i^k c_j^k < 0$,
  this term is negative and dominates higher order terms. Hence, the scheme is
  energy stable for sufficiently small $\dt > 0$.
\end{proof}

The key ingredients of the proof of Theorem~\ref{thm:probably-stable-scheme}
are distilled in
\begin{proposition}
\label{prop:rk-2-3}
  Consider a second or third order accurate explicit Runge-Kutta method
  satisfying
  \begin{equation*}
  \begin{aligned}
     &\bullet \forall i \in \set{1, \dots, s}\colon b_i \geq 0,
    &&\bullet \forall j \in \set{1, \dots, s}\colon b_j - \sum_{i=1}^s b_i a_{ij} - b_j c_j = 0,
    \\
     &\bullet \forall i \in \set{1, \dots, s}\colon c_i = \sum_{j=1}^s a_{ij},
    &&\bullet\forall k \in \N\colon \text{The sum in \eqref{eq:sign-condition} is negative}.
  \end{aligned}
  \end{equation*}
  Such a scheme is conditionally energy stable for any autonomous
  ODE \eqref{eq:ode-autonomous} with right hand side that is
  analytical and semibounded with respect to an inner product.
\end{proposition}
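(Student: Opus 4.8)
The plan is to follow and generalize the proof of Theorem~\ref{thm:probably-stable-scheme}, working throughout with the tree expansion \eqref{eq:estimate-RK-trees} of $\norm{u_+}^2 - \norm{u_0}^2$ for an analytic, semibounded right hand side $f$. First I would note that the term proportional to $\dt$, namely $2 \dt \sum_i b_i \scp{u_i}{f(u_i)}$, is non-positive: semiboundedness gives $\scp{u_i}{f(u_i)} \leq 0$ at every stage value, and $b_i \geq 0$ by the first hypothesis. It therefore suffices to control the tree sum. Using the second and third hypotheses together with consistency $\sum_i b_i = 1$ (which follows from first order accuracy), I would show, exactly as in the identity displayed in the proof of Theorem~\ref{thm:probably-stable-scheme}, that every term pairing the one-node tree $\rootedtree[]$ with an arbitrary tree $t_2$ has coefficient $\sum_j \bigl( b_j - \sum_i b_i a_{ij} - b_j c_j \bigr) (\Phi_j D)(t_2)/\sigma(t_2) = 0$. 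In particular the $\dt^2$ and $\dt^3$ contributions vanish and no inner product of $f$ with a higher elementary differential survives, so all remaining terms involve two trees with $\abs{t_1}, \abs{t_2} \geq 2$.

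The core of the argument is a combinatorial statement about elementary differentials. Let $\tau_m$ denote the bushy tree with $m$ leaves, of order $m+1$, derivative weight $c_i^m$, symmetry $m!$, and elementary differential $f^{(m)}(f, \dots, f)$. I would let $k \in \N$ be minimal with $f^{(k)}(f, \dots, f) \neq 0$, so that $f^{(m)}(f, \dots, f) = 0$ for $1 \leq m < k$. Since each $f^{(m)}$ is multilinear, $F(t) = 0$ as soon as one child subtree at the root of $t$ has vanishing elementary differential; an induction on the order then shows that $F(t) = 0$ for every tree with $2 \leq \abs{t} \leq k$ and that $\tau_k$ is the unique tree of order $k+1$ with $F(t) \neq 0$. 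Consequently, in the surviving tree sum every nonvanishing term has $\abs{t_1}, \abs{t_2} \geq k+1$, so all contributions up to order $\dt^{2k+1}$ vanish, and the lowest surviving order $\dt^{2k+2}$ comes solely from $t_1 = t_2 = \tau_k$. Its coefficient is $\frac{1}{(k!)^2} \sum_{i,j} (b_i b_j - b_i a_{ij} - b_j a_{ji}) c_i^k c_j^k$, which is negative by the fourth hypothesis \eqref{eq:sign-condition}.

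To conclude, I would take the truncation order $n$ in \eqref{eq:estimate-RK-trees} large enough (say $n \geq 2k$) that its remainder is $\O(\dt^{2k+3})$; then the negative term $\frac{1}{(k!)^2} \sum_{i,j} (b_i b_j - b_i a_{ij} - b_j a_{ji}) c_i^k c_j^k \, \dt^{2k+2} \norm{f^{(k)}(f, \dots, f)}^2$ dominates, the tree sum is strictly negative for all sufficiently small $\dt > 0$, and together with the non-positive $\dt$-term this gives $\norm{u_+}^2 \leq \norm{u_0}^2$. The degenerate case in which $f^{(m)}(f, \dots, f) = 0$ for every $m \geq 1$ falls out of the same lemma: then $F(t) = 0$ for all trees of order at least two, the entire tree sum vanishes, and only the non-positive $\dt$-term remains. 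I expect the main obstacle to be the combinatorial lemma --- rigorously tracking, via multilinearity and induction on the tree order, that the vanishing of the lower bushy differentials forces every elementary differential of order at most $k$ (other than $f$ itself) to vanish and singles out $\tau_k$ at order $k+1$ --- along with the bookkeeping ensuring the expansion is carried far enough that the identified $\dt^{2k+2}$ term genuinely dominates the truncation remainder. The order-two-or-three hypothesis enters only through the consistency and quadrature identities used to eliminate the low-order terms; the combinatorial heart of the argument is insensitive to the order of the method.
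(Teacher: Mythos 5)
Your proof is correct and takes essentially the same route as the paper, whose own proof of the Proposition simply points to that of Theorem~\ref{thm:probably-stable-scheme}: the $\dt$-term is non-positive by $b_i \geq 0$ and semiboundedness, the quadrature identity kills every pairing with the one-node tree, the lowest surviving order $\dt^{2k+2}$ reduces to the bushy-tree pairing with negative coefficient by \eqref{eq:sign-condition}, and this term dominates for small $\dt$. You merely supply details the paper leaves implicit, both correctly: the multilinearity-plus-induction lemma showing the vanishing bushy differentials force $F(t)(u_0)=0$ for all trees with $2 \leq \abs{t} \leq k$ and single out $\tau_k$ at order $k+1$, and the degenerate case with all bushy differentials zero (where, to be fully precise, one should note that the quadratic-form term in \eqref{eq:estimate-RK} is analytic in $\dt$, so the vanishing of all its Taylor coefficients makes it identically zero for small $\dt$, leaving only the non-positive first term).
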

The proof is basically the same as that of
Theorem~\ref{thm:probably-stable-scheme}. The restriction to second and third
order accurate schemes is explained in Section~\ref{sec:limitations-HO-schemes}
below.

\begin{remark}
\label{rem:rk-2-3}
  Since infinitely many constraints have to be satisfied to apply
  Proposition~\ref{prop:rk-2-3}, it is useful to consider additional
  simplifying assumptions/constraints in order to find feasible solutions.
  The Runge-Kutta method given in Example~\ref{ex:sign-condition} and other
  schemes have been constructed using the following additional steps.
  \begin{itemize}
    \item
    Because of Theorem~\ref{thm:sign-condition}, the node with biggest
    absolute value should appear at least twice.
    In order to facilitate the search for a solution, it has been useful
    to choose the nodes $c_i$ manually. Here, the biggest node is
    chosen as $1$ (twice).
    In numerical experiments, it seemed to be useful/necessary to specify
    also the node $0$ twice.

    \item
    The sum in \eqref{eq:sign-condition} with $k = 1$ has to be negative.
    Additionally, the same sum involving only the nodes $0$ and $1$ should
    be negative. In numerical experiments, this additional condition has often
    be sufficient to guarantee that the sum in \eqref{eq:sign-condition} is
    negative for general $k \in \N$.
  \end{itemize}
\end{remark}

\begin{example}
\label{ex:stable-rk_3_5-Mathematica}
  The third order accurate Runge-Kutta method with Butcher coefficients
  \begin{equation}
  \begin{aligned}
    \begin{array}{c|ccccc}
    0 & & & & & \\
    \nicefrac{1}{2} & \nicefrac{1}{2} & & & \\
    1 & 1 & & & \\
    0 & 1 & 0 & -1 & \\
    1 & -3 & 2 & 1 & 1 \\
    \hline
    & 0 & \nicefrac{2}{3} & 0 & \nicefrac{1}{6} & \nicefrac{1}{6} \\
    \end{array}
  \end{aligned}
  \end{equation}
  has been constructed using the approach just described.
  It is energy stable for autonomous ODEs \eqref{eq:ode-autonomous}
  with analytical and semibounded right hand side in inner product spaces
  if the time step $\dt > 0$ is sufficiently small.
  Indeed, since
  \begin{equation}
    \sum_{i,j=1}^s (b_i b_j - b_{i} \, a_{ij} - b_j \, a_{ji}) c_i^k c_j^k
    =
    - \frac{11}{36} - \frac{4}{9} 2^{-k} \bigl( 1 - 2^{-k} \bigr)
    <
    0,
  \end{equation}
  the sum in \eqref{eq:sign-condition} is negative for general $k \in \N$
  and Proposition~\ref{prop:rk-2-3} can be applied.
\end{example}

\subsection{Limitations for Higher Order Schemes}
\label{sec:limitations-HO-schemes}

The conditions listed in Proposition~\ref{prop:rk-2-3} are not sufficient
to create energy stable fourth order methods, since the coefficient of
$\norm{f' f}^2$ in the expansion \eqref{eq:estimate-RK-trees} vanishes
because of accuracy constraints. However, terms like $\scp{ f' f }{ f' f' f}$ etc. appear later, which cannot be controlled in general. Hence, one has
to impose additionally that all scalar products of $f' f$ with higher order
differentials vanish. Because of
\begin{multline}
  \sum_{i,j=1}^s (b_i b_j - b_i a_{ij} - b_j a_{ji})
    \frac{(\Phi_i D)(\rootedtree[*]) \, (\Phi_j D)(t_2)}{\sigma(\rootedtree[*]) \, \sigma(t_2)}
  \\
  =
  \sum_{j=1}^s
    \biggl( b_j \sum_{i=1}^s c_i - \sum_{i=1}^s b_i c_i a_{ij} - b_j \sum_{i=1}^s a_{ji} c_i \biggr)
    \frac{(\Phi_j D)(t_2)}{\sigma(t_2)},
\end{multline}
this additional constraint is
\begin{equation}
  b_j \sum_{i=1}^s c_i - \sum_{i=1}^s b_i c_i a_{ij} - b_j \sum_{i=1}^s a_{ji} c_i = 0.
\end{equation}
By summing over $j$ and using the order conditions for order 3, it becomes
clear that some nodes $c_i$ must be negative if this constraint should be
satisfied.

Nevertheless, even this additional constraint does not suffice to guarantee
energy stability. Indeed, terms involving higher order differentials of the
form $\norm{f' f' f}^2$ appear and cannot be controlled by the previous terms
with negative coefficients.

\section{Unconditional Stability of Explicit Runge-Kutta Discretizations}
\label{sec:special-example}

In this section we investigate the possibility of obtaining unconditional
stability with explicit Runge-Kutta methods.  It is usually true in numerical
analysis that explicit methods can be only conditionally stable.  The following
(unsurprising) theorem confirms this view, in the context of the linear
autonomous initial-value problem:
\begin{align} \label{linear-ODE}
\begin{aligned}
  \od{}{t} u(t) &= L u(t),
  && t \in [0,T],
  \\
  u(0) &= u_0,
\end{aligned}
\end{align}
\begin{theorem}
\label{thm:linear-blowup}
  Let an at least first order accurate explicit Runge-Kutta method and
  constant (possibly complex) matrix $L \ne 0$ be given.
  Then, there exist initial values $u_0$ and a step size $\dt_*(A,b,L)$
  such that the numerical solution $u^n$ of \eqref{linear-ODE}
  blows up as $n \to \infty$ for any $\dt > \dt_*$.
\end{theorem}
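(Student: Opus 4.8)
The plan is to exploit the fact that for the linear autonomous problem \eqref{linear-ODE} one step of an explicit Runge--Kutta method acts as multiplication by the (polynomial) stability function, $u_+ = R(\dt L) u_0$. First-order accuracy forces $R(z) = 1 + z + \O(z^2)$, so $R$ is a nonconstant polynomial of some degree $d$ with $1 \le d \le s$ and nonzero leading coefficient $\gamma_d$. After $n$ steps $u^n = R(\dt L)^n u_0$, so the fate of the numerical solution is governed by the powers of the single matrix $R(\dt L)$, whose eigenvalues are exactly $R(\dt\lambda)$ as $\lambda$ ranges over the eigenvalues of $L$ (spectral mapping for polynomials). Since $L \ne 0$ is a complex matrix it has at least one eigenvalue, and I would split the argument according to whether $L$ admits a \emph{nonzero} eigenvalue.

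In the first case, fix an eigenvalue $\lambda \ne 0$ of $L$ with eigenvector $v$, so that $R(\dt L)^n v = R(\dt\lambda)^n v$. It then suffices to produce a threshold beyond which $\abs{R(\dt\lambda)} > 1$. For this I would observe that $t \mapsto \abs{R(t\lambda)}^2 = R(t\lambda)\,\overline{R(t\lambda)}$ is a real polynomial in $t$ of degree $2d$ with strictly positive leading coefficient $\abs{\gamma_d}^2 \abs{\lambda}^{2d}$; such a polynomial tends to $+\infty$ and is eventually strictly increasing, hence exceeds $1$ for every $t$ larger than some $\dt_*(A,b,L)$. Taking $u_0 = v$ then yields $\norm{u^n} = \abs{R(\dt\lambda)}^n \norm{v} \to \infty$ for all $\dt > \dt_*$, i.e. exponential blow-up.

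The delicate case — which I expect to be the main obstacle — is when $L \ne 0$ but all its eigenvalues vanish, i.e. $L$ is nilpotent, so that every eigenvalue of $R(\dt L)$ equals $R(0) = 1$ and the spectral-radius argument yields nothing. Here I would write $R(\dt L) = \I + N$ with $N := \dt L + \gamma_2 \dt^2 L^2 + \dots = \dt\, L\,(\I + \gamma_2 \dt L + \dots)$. The second factor is unipotent, hence invertible, so $N$ inherits the nilpotency index $p \ge 2$ of $L$; in particular $N^{p-1} \ne 0$ while $N^p = 0$. Expanding $R(\dt L)^n = (\I+N)^n = \sum_{j=0}^{p-1}\binom{n}{j}N^j$, the dominant contribution for large $n$ is $\binom{n}{p-1}N^{p-1} \sim \tfrac{n^{p-1}}{(p-1)!}\,N^{p-1}$. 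Choosing $u_0$ with $N^{p-1} u_0 \ne 0$ gives $\norm{u^n} \to \infty$ (polynomially) for \emph{every} $\dt > 0$, so the conclusion holds with $\dt_* = 0$.

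Combining the two cases establishes the theorem. The only genuinely nontrivial step is the nilpotent bookkeeping, which is needed precisely because a nonzero matrix need not have a nonzero eigenvalue; everything else reduces to spectral mapping together with the elementary asymptotics of a single-variable polynomial.
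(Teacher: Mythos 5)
Your proposal is correct and takes essentially the same approach as the paper: one step is multiplication by the stability polynomial $R(\dt L)$, and the argument splits into the same two cases, a nonzero eigenvalue $\lambda$ (where $\abs{R(\dt\lambda)} > 1$ for $\dt$ beyond a threshold, giving exponential growth along the eigenvector) and nilpotent $L$ (polynomial growth for every $\dt > 0$). The only cosmetic difference is in the nilpotent case, where the paper shortcuts your general binomial expansion with index $p$ by choosing $u_0$ with $L u_0 \neq 0$ but $L^2 u_0 = 0$, so that $u^n = u_0 + n \dt L u_0$ grows linearly.
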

\begin{proof}
  An $s$-stage explicit Runge-Kutta method applied to \eqref{linear-ODE}
  gives the solution $u^{n+1} = R(\dt L) u^n$, where
  $R(z) = \sum_{j=0}^d \alpha_j z^j$ is a polynomial with degree $d \le s$
  and $\alpha_0 = \alpha_1 = 1$.
  Two cases can occur.

  1. $L$ has an eigenvalue $\lambda \neq 0$ with eigenvector $u_0$:
  Then, $R(\dt L) u_0 = (I + \alpha_1 \dt L + \dots + \alpha_d \dt^d L^d) u_0
  = (I + \alpha_1 \dt \lambda + ... + \alpha_d \dt^d \lambda^d) u_0$.
  Thus, $R(\dt L)^n u_0 \to \infty$ for $n \to \infty$ if $\dt$ is big
  enough.

  2. Zero is the only eigenvalue of $L$:
  Then, there is a vector $u_0$ such that $L v \neq 0$ but $L^2 v = 0$
  (consider the Jordan canonical form). Thus,
  $R(\dt L)^n u_0 = (1 + n \alpha_1 \dt L) u_0 \to \infty$ for $n \to \infty$
  if $\dt$ is big enough.
\end{proof}
In practice, due to rounding errors, it is reasonable to expect a blowup
for almost all initial data.

\subsection{Nonlinear Problems}

It is natural to ask if a result like Theorem \ref{thm:linear-blowup} holds when $L$
is allowed to be nonlinear.  It seems quite natural to expect that the answer
is yes.  However, we have the following result:
\begin{theorem}
\label{thm:midpoint-stability}
    There exists an explicit Runge-Kutta method and non-trivial
    function $f(u)$ such that the numerical solution of $u'(t)=f(u)$
    remains bounded as $n\to\infty$ for every step size $\dt$ and
    for every initial value.
\end{theorem}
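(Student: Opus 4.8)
The plan is to exhibit a concrete pair consisting of the explicit midpoint method and a state-dependent rotation, and to verify directly that the resulting one-step map preserves the norm exactly, independently of $\dt$. Concretely, I would take the two-stage explicit midpoint scheme with Butcher tableau
\begin{equation*}
\begin{array}{c|cc}
0 & & \\
\nicefrac{1}{2} & \nicefrac{1}{2} & \\
\hline
& 0 & 1
\end{array}
\end{equation*}
and the nonlinear right-hand side $f(u) = \frac{1}{\norm{u}^2} \begin{psmallmatrix} 0 & -1 \\ 1 & 0 \end{psmallmatrix} u$ on $\R^2 \setminus \set{0}$, extended by $f(0) := 0$. Writing $R = \begin{psmallmatrix} 0 & -1 \\ 1 & 0 \end{psmallmatrix}$, the structural facts I would exploit are $R^2 = -\I$, $\norm{Ru} = \norm{u}$, and $\scp{u}{Ru} = 0$; the last identity makes $f$ energy-conservative, $\scp{u}{f(u)} = 0$, so the exact solution moves on a circle.

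First I would write out the two stages explicitly. With $r := \norm{u_0} > 0$, the first stage is $u_1 = u_0$ and the second is $u_2 = u_0 + \frac{\dt}{2}\frac{1}{r^2} R u_0$; since the cross term $\scp{u_0}{Ru_0}$ vanishes, $\norm{u_2}^2 = r^2 + \frac{\dt^2}{4 r^2} =: \rho^2 > 0$, so $u_2 \neq 0$ and the update $u_+ = u_0 + \dt\, f(u_2) = u_0 + \frac{\dt}{\rho^2} R u_2$ is well defined. Substituting $u_2$ and using $R^2 = -\I$ gives $u_+ = \bigl(1 - \tfrac{\dt^2}{2 r^2 \rho^2}\bigr) u_0 + \frac{\dt}{\rho^2} R u_0$.

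Next I would compute $\norm{u_+}^2$. Because $\scp{u_0}{Ru_0} = 0$ and $\norm{Ru_0} = r$, the two contributions are orthogonal and
\begin{equation*}
\norm{u_+}^2 = \Bigl(1 - \tfrac{\dt^2}{2 r^2 \rho^2}\Bigr)^2 r^2 + \frac{\dt^2}{\rho^4} r^2.
\end{equation*}
Clearing denominators with $\rho^2 = \tfrac{4r^4 + \dt^2}{4 r^2}$, the bracket collapses to $\tfrac{(4r^4 - \dt^2)^2 + 16 r^4 \dt^2}{(4 r^4 + \dt^2)^2} = 1$, so $\norm{u_+} = \norm{u_0}$ for every $\dt > 0$. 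By induction every iterate satisfies $\norm{u^n} = \norm{u_0}$, while the case $u_0 = 0$ stays at the origin; hence the numerical solution is bounded for all initial data and all step sizes.

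The substantive part of the argument is not the final algebra but the choice of nonlinearity: the $\norm{u}^{-2}$ weight is precisely what forces the $\dt$-dependence to cancel. I would motivate it by imposing $\norm{u_+} = \norm{u_0}$ within the ansatz $f(u) = g(\norm{u}) R u$; repeating the above computation reduces this to the functional equation $g(r w) = g(r)/w^2$ for all $w \ge 1$, whose only solution is $g(r) = c/r^2$. The one genuine subtlety I expect to be the main obstacle is the singularity of $f$ at the origin: I would address it by noting that the stage computation itself shows no stage value can reach $0$ when $u_0 \neq 0$ (since $\norm{u_2} = \rho \ge r$ and $\norm{u_+} = r$), and that the origin is a fixed point, so the scheme is globally well defined and the boundedness claim genuinely holds for every initial value.
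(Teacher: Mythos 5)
Your proposal is correct and follows essentially the same route as the paper: both exhibit the explicit midpoint method together with $f(u) = \norm{u}^{-2}\begin{psmallmatrix} 0 & -1 \\ 1 & 0 \end{psmallmatrix} u$ and verify by direct calculation that the one-step map conserves $\norm{u}^2$ exactly for every $\dt$ (the paper via the energy identity \eqref{eq:estimate-RK}, reducing to $\scp{f(u_0)}{f(y_2)} - \norm{f(y_2)}^2 = 0$; you via explicit computation of $u_+$). Your additional care about the singularity at the origin and the non-vanishing of the stage values, as well as the motivating functional equation for $g$, are consistent with the paper's subsequent uniqueness result (Theorem~\ref{thm:rk2-and-problem-unique}) but do not change the argument.
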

\begin{proof}
We prove this result by constructing an example -- the only example of which we
are presently aware.  We take the explicit midpoint Runge-Kutta method
\begin{equation} \label{rk2}
    y_2 = u^n + \frac{\dt}{2}f(u^n),
    \qquad
    u^{n+1} = u^n + \dt f(y_2),
\end{equation}
and the ODEs
\begin{equation} \label{ode}
  \begin{pmatrix} u_1'(t) \\ u_2'(t) \end{pmatrix}
  =
  \frac{1}{u_1^2 + u_2^2} \begin{pmatrix} -u_2 \\ u_1 \end{pmatrix}.
\end{equation}
Direct calculation of the change in energy over a step (using
\eqref{eq:estimate-RK}) reveals that it is constant:
\begin{align}
  \norm{ u_+ }^2 - \norm{ u_0 }^2 = \scp{f(u_0)}{f(y_2)} - \scp{f(y_2)}{f(y_2)} = 0.
\end{align}
\end{proof}
In fact, we can write explicitly the solution obtained for the example in the
proof.  For the general initial value $u_0 = r_0 (\cos(\theta_0), \sin(\theta_0))^T$,
the numerical solution of \eqref{ode} obtained with the explicit midpoint RK method is
\begin{align}
  u^n & = r_0 \begin{pmatrix} \cos(\theta_0+n\theta_h) \\ \sin(\theta_0+n\theta_h) \end{pmatrix},
  \quad \text{where } \quad
  \theta_h = \arccos\frac{r^2-\frac{\dt^2}{4r^2}}{r^2+\frac{\dt^2}{4r^2}}.
\end{align}

This example is quite remarkable, and naturally leads one to wonder if
others like it exist. If we assume the numerical energy is constant, then the
problem must also be energy conservative
since the Runge-Kutta scheme converges to the analytical solution (if the
right hand side is locally Lipschitz continuous). In $\R^2$, every energy
conservative problem is of the form
\begin{align} \label{fdef}
  \begin{pmatrix} u_1'(t) \\ u_2'(t) \end{pmatrix}
  =
  g(u) \begin{pmatrix} -u_2 \\ u_1 \end{pmatrix},
\end{align}
where $g$ is a scalar valued function. We have the following uniqueness result.
\begin{theorem}
\label{thm:rk2-and-problem-unique}
  Let a consistent two-stage explicit Runge-Kutta method and a
  function $g\colon \R^2 \to \R$ be given.
  Consider the ODE \eqref{fdef} and suppose that the numerical solution
  satisfies $\norm{u^n} = \norm{u_0}$ for all step sizes $\dt$ and all $n$.
  \begin{enumerate}[label=\alph*)]
    \item \label{itm:rk2-unique}
    If $g$ is not identically zero, then the Runge-Kutta method must be
    the explicit midpoint method \eqref{rk2}.

    \item \label{itm:problem-unique}
    If $g\colon \R^2 \setminus \set{0} \to \R$ is analytic,
    then $g$ must be a scalar multiple of $u \mapsto \norm{u}^{-2}$.
  \end{enumerate}
\end{theorem}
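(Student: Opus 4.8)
The starting point is the energy-increment formula \eqref{eq:estimate-RK}, which simplifies enormously here because \eqref{fdef} is energy conservative. Writing $f(u) = g(u) J u$ with the skew, orthogonal matrix $J = \begin{psmallmatrix} 0 & -1 \\ 1 & 0 \end{psmallmatrix}$, one has $\scp{u}{f(u)} = 0$, so the term proportional to $\dt$ in \eqref{eq:estimate-RK} vanishes. In the autonomous case only $a_{21}$, $b_1$ and $b_2 = 1 - b_1$ enter, and the stages are $u_1 = u_0$ and $y_2 = (I + \alpha J)u_0$ with $\alpha = \dt\, a_{21}\, g_0$, $g_0 := g(u_0)$. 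Using $\norm{Jv} = \norm{v}$ and $\scp{u_0}{J u_0} = 0$ I would compute $\norm{f(u_0)}^2 = g_0^2 \norm{u_0}^2$, $\scp{f(u_0)}{f(y_2)} = g_0\, g(y_2)\, \norm{u_0}^2$ and $\norm{f(y_2)}^2 = (1+\alpha^2)\, g(y_2)^2\, \norm{u_0}^2$, substitute into \eqref{eq:estimate-RK} and divide by $\dt^2\norm{u_0}^2$. This yields the scalar identity, valid for every $u_0 \neq 0$ and every $\dt$,
\begin{equation}
\label{eq:plan-key}
  b_1^2 g_0^2 + 2(b_1 b_2 - b_2 a_{21})\, g_0\, g(y_2) + b_2^2 (1+\alpha^2)\, g(y_2)^2 = 0 .
\end{equation}

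For part~\ref{itm:rk2-unique} I would read \eqref{eq:plan-key} as a quadratic equation in the real number $g(y_2)$. First, $a_{21} \neq 0$ and $b_2 \neq 0$, since either equality collapses \eqref{eq:plan-key} to $(b_1 + b_2)^2 g_0^2 = g_0^2 = 0$, contradicting $g \not\equiv 0$. Fixing a $u_0$ with $g_0 \neq 0$, the discriminant of \eqref{eq:plan-key} equals $g_0^2\big[(b_1 b_2 - b_2 a_{21})^2 - b_1^2 b_2^2 (1+\alpha^2)\big]$ and must be non-negative because $g(y_2)$ is a real root; as $\dt \to \infty$ we have $\alpha^2 \to \infty$, so this forces $b_1 = 0$ (otherwise the discriminant tends to $-\infty$, as $b_2 \neq 0$). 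With $b_1 = 0$, $b_2 = 1$, equation \eqref{eq:plan-key} reduces to $g(y_2)\big[(1+\alpha^2) g(y_2) - 2 a_{21} g_0\big] = 0$; since $g$ is continuous, $g(y_2) \to g_0 \neq 0$ as $\dt \to 0$, so the bracket must vanish near $\dt = 0$, and the limit $\dt \to 0$ gives $a_{21} = \tfrac12$. Thus the method is the explicit midpoint method \eqref{rk2}.

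For part~\ref{itm:problem-unique}, if $g \equiv 0$ the claim holds with the zero multiple; otherwise part~\ref{itm:rk2-unique} fixes the method as midpoint, so \eqref{eq:plan-key} becomes $g(y_2)\big[(1+\alpha^2) g(y_2) - g_0\big] = 0$. Fix $u_0$ with $g_0 \neq 0$. Since $\norm{y_2}^2 = (1+\alpha^2)\norm{u_0}^2 > 0$, the point $y_2$ is never zero, so both $\dt \mapsto g(y_2)$ and $\dt \mapsto g_0/(1+\alpha^2)$ are analytic in $\dt$; the first is not identically zero (it equals $g_0$ at $\dt = 0$), so the identity theorem upgrades the pointwise alternative to $g\big((I + \alpha J)u_0\big) = g_0/(1+\alpha^2)$ for all $\dt$. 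Setting $H(u) := \norm{u}^2 g(u)$ this reads $H\big((I + \alpha J)u_0\big) = H(u_0)$ on the open dense set $\set{g \neq 0}$; since $H$ is analytic and $\R^2 \setminus \set{0}$ is connected, the identity extends to all $u_0 \neq 0$ and all $\alpha \in \R$.

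It then remains to show $H$ is constant. Identifying $\R^2$ with $\C$ so that $J$ becomes multiplication by $\i$, the invariance says $H$ is constant on every orbit of the multiplicative subgroup of $\C \setminus \set{0}$ generated by $\set{1 + \alpha\i | \alpha \in \R}$. This subgroup is all of $\C \setminus \set{0}$ (its generators have modulus $\sqrt{1+\alpha^2}$ and argument $\arctan\alpha \in (-\tfrac{\pi}{2},\tfrac{\pi}{2})$, and the products $(1+\alpha\i)(1-\alpha\i) = 1+\alpha^2$, the element $\i = (1+\i)/(1-\i)$, and inverses together realise every positive modulus and every argument), and it acts transitively on $\C \setminus \set{0}$; hence $H \equiv c$ and $g(u) = c\, \norm{u}^{-2}$. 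I expect the functional-equation bridge to be the main obstacle: passing from the pointwise alternative in \eqref{eq:plan-key} to a genuine identity (which needs analyticity together with $y_2 \neq 0$) and then recognising the orbit structure of $I + \alpha J$ is the crux, whereas the algebra producing \eqref{eq:plan-key} and the discriminant argument of part~\ref{itm:rk2-unique} are routine.
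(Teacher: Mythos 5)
Your proposal is correct. For part~\ref{itm:rk2-unique} it is essentially the paper's argument: your key quadratic identity coincides with \eqref{s-quadratic}, and the discriminant argument as $\dt \to \infty$ forcing $b_1 = 0$, followed by the $\dt \to 0$ limit giving $a_{21} = \nicefrac{1}{2}$, is exactly the paper's reasoning; your upfront elimination of $a_{21} = 0$ and $b_2 = 0$ via the collapse to $g_0^2 = 0$ is a mild streamlining of the paper's case analysis (and, like the paper, you implicitly use continuity of $g$ in the $\dt \to 0$ step, which the theorem statement for part~\ref{itm:rk2-unique} does not formally grant). For part~\ref{itm:problem-unique}, however, you take a genuinely different route. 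The paper expands \eqref{g-condition} in powers of $\dt$, extracts $g'f = 0$ and the infinite recursion \eqref{eq:conditions-g}, and argues that radial symmetry plus the recursion fix all derivatives at a point, so $g$ is determined up to a scalar multiple --- implicitly leaning on the fact that $\norm{u}^{-2}$ is known to be a solution. You instead keep the functional equation intact: the identity theorem for real-analytic functions (valid since $\norm{y_2}^2 = (1+\alpha^2)\norm{u_0}^2 > 0$ keeps $y_2$ in the domain of analyticity) resolves the pointwise alternative ``$g(y_2) = 0$ or the bracket vanishes'' --- a branch the paper waves off as ``trivial'' --- into the exact relation $g\bigl((I+\alpha J)u_0\bigr) = g(u_0)/(1+\alpha^2)$, i.e.\ invariance of $H(u) = \norm{u}^2 g(u)$ under multiplication by $1+\alpha\i$ after identifying $\R^2$ with $\C$; since these elements generate all of $\C \setminus \set{0}$, which acts transitively, $H$ is constant and $g = c\norm{u}^{-2}$ follows in closed form. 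What each approach buys: yours is more complete at precisely the points where the paper is terse (the $g(y_2)=0$ branch, the extension from the open dense set $\set{g \neq 0}$ to all of $\R^2 \setminus \set{0}$, and the passage from a half-line of step sizes to all $\alpha \in \R$), and it identifies the solution directly rather than proving uniqueness up to scale given an ansatz; the paper's power-series route is shorter and stays within the elementary-differential/Taylor machinery used throughout Section~\ref{sec:autonomous}, but leaves those steps to the reader.
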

\begin{proof}
  \ref{itm:rk2-unique}
  For brevity, let $r = g(u_0)$ and $s=g(y_2) = g(u_0 + a_{21} \dt f(u_0))$.
  \begin{equation}
  \begin{aligned}
      \scp{f_1}{f_1} & = \norm{u_0}^2 r^2, \\
      \scp{f_2}{f_2} & = \norm{u_0}^2 s^2 (1 + a_{21}^2 \dt^2 r^2), \\
      \scp{f_1}{f_2} & = \norm{u_0}^2 r s.
  \end{aligned}
  \end{equation}
  Thus energy is conserved if and only if (after dividing through by
  $\norm{u_0}^2$)
  \begin{equation} \label{s-quadratic}
    2b_2 a_{21} r s
    =
    b_1^2 r^2 + b_2^2 s^2 (1 + a_{21}^2 \dt^2 r^2) + 2 b_1 b_2 r s
    =
    0.
  \end{equation}
  This is a quadratic equation in $s$, which has real roots only if
  \begin{equation}
    b_2^2 r^2 (b_1-a_{21})^2 \geq b_1^2 b_2^2 r^2 (1+a_{21}^2 \dt^2 r^2).
  \end{equation}
  This cannot hold for all $\dt$ unless the term involving $\dt$ vanishes.
  So we must have
  $b_1 b_2 a_{21} r = 0$.  The case $r=0$ is ruled out by assumption, while $b_2=0$
  implies $r=0$ by \eqref{s-quadratic}.  Taking $a_{21}=0$ implies (by \eqref{s-quadratic})
  that the ratio $r/s$ is equal to a constant independent of $\dt$ or $u$, which is not
  possible.  Thus we must have $b_1=0$; consistency then requires $b_2=1$.
  This implies that $s=0$ (trivial) or that
  \begin{align} \label{g-condition}
      g(u+a_{21} \dt f(u)) (1+a_{21}^2 \dt^2 (g(u))^2) = 2 a_{21} g(u).
  \end{align}
  Considering $\dt \to 0$, we find that necessarily $a_{21}=1/2$;
  the resulting method is the midpoint Runge-Kutta method \eqref{rk2}.

  \ref{itm:problem-unique}
  It suffices to consider $g \neq 0$. Expanding \eqref{g-condition} with
  $a_{21} = \nicefrac{1}{2}$ as required by \ref{itm:rk2-unique} yields
  \begin{equation}
    \sum_{k \geq 0} \frac{1}{k!} 2^{-k} \dt^k g^{(k)}(f, \dots, f)
    \bigl( 1 + 2^{-2} \dt^2 g^2 \bigr)
    =
    g.
  \end{equation}
  This is equivalent to
  \begin{equation}
    \frac{1}{2} \dt g' f + \sum_{k \geq 2} 2^{-k} \dt^k \left(
      \frac{1}{k!} g^{(k)}(f, \dots, f) + \frac{1}{(k-2)!} g^2 g^{(k-2)}(f, \dots, f)
    \right) = 0.
  \end{equation}
  Since this has to hold for all $\dt$,
  \begin{equation}
  \label{eq:conditions-g}
    g'f = 0, \quad \forall k \geq 2\colon
    g^{(k)}(f, \dots, f) = - \frac{k!}{(k-2)!} g^2 g^{(k-2)}(f, \dots, f).
  \end{equation}
  This infinite set of conditions determines $g$ uniquely up to a scalar
  multiple.
  Indeed, using polar coordinates $(r,\theta)$ for $u$, the condition
  $g' f = 0$ implies that $g$ does not depend on the angle~$\theta$, i.e.
  that $g$ is radially symmetric. Hence, $g$ can be considered as a function
  depending only on the radius $r$ in \eqref{g-condition}. Expanding
  this analytic function $(0,\infty) \to \R$ in \eqref{g-condition},
  all derivatives at an arbitrary point are fixed. Hence, $g$ is determined
  up to a multiplicative factor.
\end{proof}

\begin{remark}
  Theorem~\ref{thm:rk2-and-problem-unique} holds also for $\R^3$
  instead of $\R^2$. Indeed, the action of an arbitrary skew-symmetric
  matrix is equivalent to the cross product with an associated
  vector in $\R^3$. The span of this vector is irrelevant for the
  considered problem.
\end{remark}

\subsection{Numerical Experiments}

Here, some numerical experiments using the ODE \eqref{ode} are performed.
The explicit midpoint method \eqref{rk2}, the third order strong stability
preserving method SSPRK33 of \cite{shu1988efficient}, and the energy
stable second and third order methods given in Examples~\ref{ex:sign-condition}
and \ref{ex:stable-rk_3_5-Mathematica} are applied with constant time steps
$\dt \in \set{10^{-1}, 10^{-5}}$.
The methods are implemented using quadruple precision numbers \texttt{Float128}
in the package \texttt{DifferentialEquations.jl}
\cite{rackauckas2017differentialequations} in Julia \cite{bezanson2017julia}.
The source code for these numerical experiments is available at
\cite{ranocha2019energyRepro}.

The results displayed in Figure~\ref{fig:special_problem} confirm the
analytical results: The energy grows monotonically for SSPRK33, stays constant
for the midpoint rule and decays for the energy stable methods.

\begin{figure}
\centering
  \begin{subfigure}[b]{\textwidth}
    \centering
    \includegraphics[width=\textwidth]{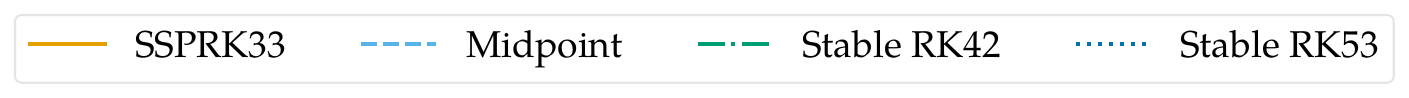}
  \end{subfigure}%
  \\
  \begin{subfigure}[b]{0.5\textwidth}
    \centering
    \includegraphics[width=\textwidth]{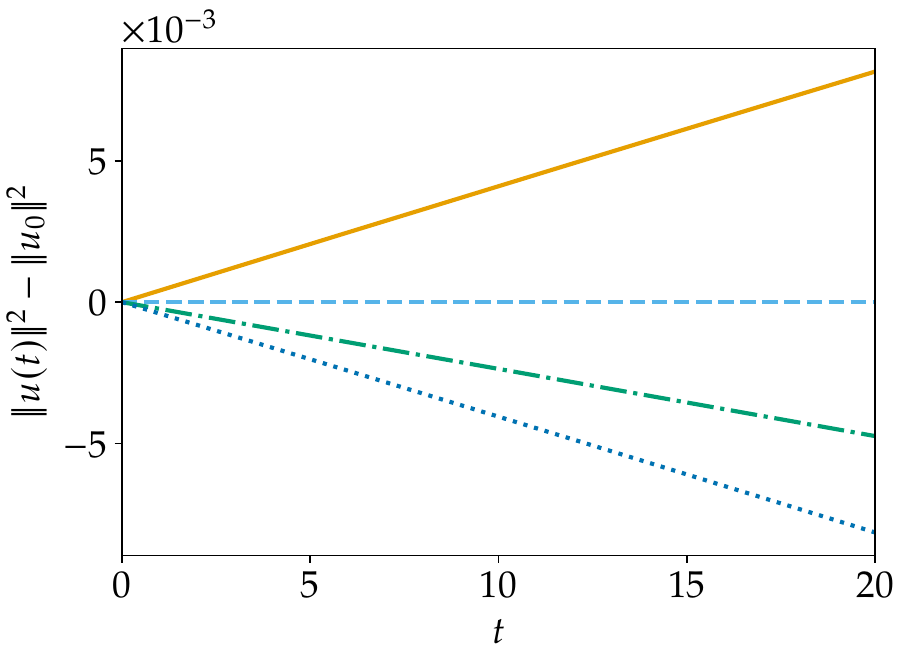}
    \caption{$\dt = 10^{-1}$.}
  \end{subfigure}%
  \begin{subfigure}[b]{0.5\textwidth}
    \centering
    \includegraphics[width=\textwidth]{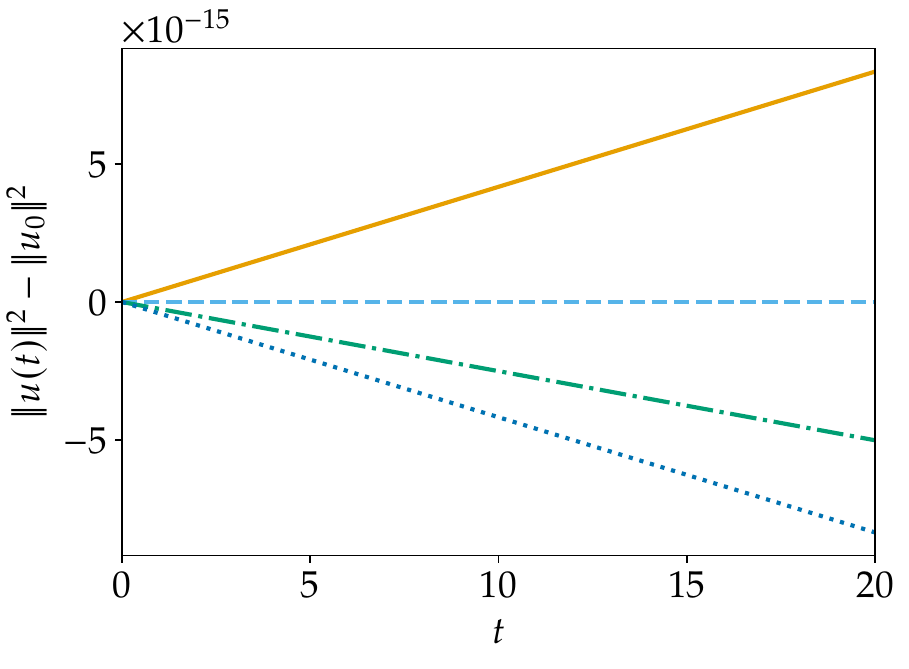}
    \caption{$\dt = 10^{-5}$.}
  \end{subfigure}%
  \caption{Evolution of the energy of numerical solutions for the
           nonlinear problem \eqref{ode}.}
  \label{fig:special_problem}
\end{figure}

\begin{remark}
  The first terms of the expansion \eqref{eq:estimate-RK-trees}
  for the test problem \eqref{ode} and the second order method of
  Example~\ref{ex:sign-condition} are
  \shortlong{$- \frac{1}{4} \dt^4 \norm{u}^{-6} + \O(\dt^5)$.}{\begin{equation}
    - \frac{1}{4} \dt^4 \norm{f' f}^2 + \O(\dt^5)
    =
    - \frac{1}{4} \dt^4 \norm{u}^{-6} + \O(\dt^5).
  \end{equation}}
  Hence, it can be verified easily that the method is conditionally
  energy stable. Similarly, the first terms for the third order
  method of Example~\ref{ex:stable-rk_3_5-Mathematica} are
  \shortlong{$- \frac{5}{12} \dt^4 \norm{u}^{-6} + \O(\dt^5)$.}{\begin{equation}
    - \frac{5}{12} \dt^4 \norm{f' f}^2 + \O(\dt^5)
    =
    - \frac{5}{12} \dt^4 \norm{u}^{-6} + \O(\dt^5).
  \end{equation}}
  Thus, this method is energy stable, too.
  Additionally, this explains why the third order method is more
  (nearly twice as) dissipative than the second order one in
  Figure~\ref{fig:special_problem}.
\end{remark}

\section{Summary and Conclusions}
\label{sec:summary}

As we have seen, explicit Runge-Kutta methods that are conditionally energy
stable for \emph{all} nonlinear autonomous semibounded problems
(with at least Lipschitz continuous right-hand sides)
are rare, but they do exist.  The existence of explicit energy stable methods
for non-autonomous problems (even in the linear setting) is still an open question.
Any explicit energy stable method for non-autonomous
problems must be confluent, since otherwise it would need to be unconditionally
stable (see Theorem~\ref{thm:Lipschitz-algebraic-stability}).
Nevertheless, there is at least a second order accurate scheme that is
energy stable for a restricted class of relevant problems
(see Proposition~\ref{pro:probbaly-stable-scheme-1}).

For nonlinear autonomous problems, our analysis is based on the series
expansion \eqref{eq:estimate-RK-trees} of the change in energy.
Besides deriving necessary conditions, Proposition~\ref{prop:rk-2-3}
and Remark~\ref{rem:rk-2-3} list sufficient conditions and approaches
that can be used to create energy stable second and third order
methods (see Theorem~\ref{thm:probably-stable-scheme} and
Example~\ref{ex:stable-rk_3_5-Mathematica}).
This approach could also be used to construct methods that are energy stable
for a particular problem, if one knows which elementary differentials of $f$
vanish.

Some of our results seem at first glance surprising or counter-intuitive.
A common intuition is that linear problems are \emph{easier} than
nonlinear problems.  But if explicit time dependence is allowed, the
linear setting becomes much more challenging: notice that the
derived necessary conditions for energy stability in this case are
more restrictive than for autonomous nonlinear problems.
In a similar vein, a \emph{particular} nonlinear ODE may be easier to deal
with than even any linear autonomous ODE, as demonstrated by
Theorem~\ref{thm:midpoint-stability} (showing unconditional stability for
a specific nonlinear ODE and explicit RK method) and Theorem~\ref{thm:linear-blowup}
(showing that no explicit RK method is unconditionally stable for any non-trivial linear
problem).

In the literature, the assumption of non-confluence is often merely technical
and not necessary.  In contrast, in our study of energy stability
for non-autonomous problems we have found that confluence is an important
property, and that certain confluent Runge-Kutta methods
(more specifically, methods with no distinct quadrature node) can have stability
properties that are impossible for non-confluent methods.

While developing many answers, this article has also revealed several open
questions and directions of further research.
First of all, an obvious question concerns the possibility of fourth or
higher order explicit Runge-Kutta that are conditionally energy stable (at least for
autonomous problems).
From a practical point of view, it would be interesting to perform a
computational optimization of energy stable Runge-Kutta methods and compare
them to state of the art schemes that are not energy stable.

More theoretically interesting questions concern the independence of
the elementary differentials for conservative problems and the existence
or uniqueness of unconditionally stable explicit Runge-Kutta methods and
associated nonlinear right hand sides.  For instance, if it were possible
to choose the values of the elementary differentials independently
while also choosing $f$ to conserve energy, then it would be possible
(in principle) to construct, for each Runge-Kutta method (including all
explicit methods), a problem for which that method is unconditionally energy
conservative.

\appendix

\section{Details of the Proof of Proposition \texorpdfstring{\ref{pro:probbaly-stable-scheme-1}}{}}
\label{app:details}

The change of the norm for general $L$ can be calculated explicitly as
\begin{align*}
\stepcounter{equation}\tag{\ref{eq:probably-stable-scheme-proof1}}
  &
  \norm{u_+}^2 - \norm{u_0}^2
  =
  \dt \biggl(
      \scps{ u_0 }{ L_0 u_0 }
    + \scps{ u_0 }{ L_1 u_0 }
  \biggr)
  \\
  &\quad
  + \frac{1}{2} \dt^2 \biggl(
      \scps{ u_0 }{ L_0^2 u_0 }
    - \scps{ u_0 }{ L_0 L_1 u_0 }
    + \scps{ u_0 }{ L_1 L_0 u_0 }
    + \scps{ u_0 }{ L_1^2 u_0 }
    \\
    &\qquad\quad
    + \frac{1}{2} \norm{ L_0 u_0 }^2
    + \scps{ L_0 u_0 }{ L_1 u_0}
    + \frac{1}{2} \norm{ L_1 u_0 }^2
  \biggr)
  \\
  &\quad
  + \frac{1}{2} \dt^3 \biggl(
    - \scps{ u_0 }{ L_0 L_1 L_0 u_0 }
    + \scps{ u_0 }{ L_1 L_0^2 u_0 }
    - \scps{ u_0 }{ L_1 L_0 L_1 u_0 }
    + \scps{ u_0 }{ L_1^2 L_0 u_0 }
    \\
    &\qquad\quad
    + \frac{1}{2} \scps{ L_0 u_0 }{ L_0^2 u_0 }
    - \frac{1}{2} \scps{ L_0 u_0 }{ L_0 L_1 u_0 }
    + \frac{1}{2} \scps{ L_0 u_0 }{ L_1 L_0 u_0 }
    + \frac{1}{2} \scps{ L_0 u_0 }{ L_1^2 u_0 }
    \\
    &\qquad\quad
    + \frac{1}{2} \scps{ L_0^2 u_0 }{ L_1 u_0 }
    + \frac{1}{2} \scps{ L_1 u_0 }{ L_1 L_0 u_0 }
    - \frac{1}{2} \scps{ L_1 u_0 }{ L_0 L_1 u_0 }
    + \frac{1}{2} \scps{ L_1 u_0 }{ L_1^2 u_0 }
  \biggr)
  \\
  &\quad
  + \frac{1}{2} \dt^4 \biggl(
    - \scps{ u_0 }{ L_1 L_0 L_1 L_0 u_0 }
    - \frac{1}{2} \scps{ L_0 u_0 }{ L_0 L_1 L_0 u_0 }
    + \frac{1}{2} \scps{ L_0 u_0 }{ L_1 L_0^2 u_0 }
    \\
    &\qquad\quad
    - \frac{1}{2} \scps{ L_0 u_0 }{ L_1 L_0 L_1 u_0 }
    + \frac{1}{2} \scps{ L_0 u_0 }{ L_1^2 L_0 u_0 }
    + \frac{1}{8} \norm{L_0^2 u_0 }^2
    - \frac{1}{4} \scps{ L_0^2 u_0 }{ L_0 L_1 u_0 }
    \\
    &\qquad\quad
    + \frac{1}{4} \scps{ L_0^2 u_0 }{ L_1 L_0 u_0 }
    + \frac{1}{4} \scps{ L_0^2 u_0 }{ L_1^2 u_0 }
    + \frac{1}{2} \scps{ L_1 u_0 }{ L_1 L_0^2 u_0 }
    \\
    &\qquad\quad
    - \frac{1}{4} \scps{ L_0 L_1 u_0 }{ L_1 L_0 u_0 }
    + \frac{1}{8} \norm{ L_1 L_0 u_0 }^2
    + \frac{1}{4} \scps{ L_1 L_0 u_0 }{ L_1^2 u_0 }
    \\
    &\qquad\quad
    - \frac{1}{2} \scps{ L_1 u_0 }{ L_0 L_1 L_0 u_0 }
    + \frac{1}{2} \scps{ L_1 u_0 }{ L_1^2 L_0 u_0 }
    - \frac{1}{2} \scps{ L_1 u_0 }{ L_1 L_0 L_1 u_0 }
    \\
    &\qquad\quad
    + \frac{1}{8} \norm{ L_0 L_1 u_0 }^2
    - \frac{1}{4} \scps{ L_0 L_1 u_0 }{ L_1^2 u_0 }
    + \frac{1}{8} \norm{ L_1^2 u_0 }^2
  \biggr)
  + \O( \dt^5 ).
\end{align*}
This calculation is also provided as Mathematica notebook in the
accompagnying repository \cite{ranocha2019energyRepro}.
Using the skew-symmetry of the operators $L_0, L_1$, the term proportional
to $\dt$ vanishes.
Since $L_0$ and $L_1$ commute, the $\dt^2$ term can be simplified to
\begin{align*}
\stepcounter{equation}\tag{\theequation}
  &
  + \frac{1}{2} \dt^2 \biggl(
      \scps{ u_0 }{ L_0^2 u_0 }
    + \scps{ u_0 }{ L_1^2 u_0 }
    + \frac{1}{2} \norm{ L_0 u_0 }^2
    + \scps{ L_0 u_0 }{ L_1 u_0}
    + \frac{1}{2} \norm{ L_1 u_0 }^2
  \biggr)
  \\
  =&
  + \frac{1}{2} \dt^2 \biggl(
    - \norm{ L_0 u_0 }^2
    - \norm{ L_1 u_0 }^2
    + \frac{1}{2} \norm{ L_0 u_0 }^2
    + \scps{ L_0 u_0 }{ L_1 u_0}
    + \frac{1}{2} \norm{ L_1 u_0 }^2
  \biggr)
  \\
  =&
  - \frac{1}{4} \dt^2 \norm{L_0 u_0 - L_1 u_0}^2.
\end{align*}
Similarly, using $L_0 L_1 = L_1 L_0$ and the skew-symmetry of both operators,
the $\dt^3$ term can be shown to vanish. For example,
\begin{equation}
  - \scps{ u_0 }{ L_0 L_1 L_0 u_0 } + \scps{ u_0 }{ L_1 L_0^2 u_0 }
  =
  + \scps{ L_0 u_0 }{ L_1 L_0 u_0 } - \scps{ L_0 u_0 }{ L_1 L_0 u_0 }
  =
  0.
\end{equation}
Using similar simplifications, the $\dt^4$ term can also be simplified.
Firstly, the terms with an odd number of $L_0$ or $L_1$ cancel each other,
leaving
\begin{multline}
  - \scps{ u_0 }{ L_1 L_0 L_1 L_0 u_0 }
  + \frac{1}{8} \norm{L_0^2 u_0 }^2
  + \frac{1}{4} \scps{ L_0^2 u_0 }{ L_1^2 u_0 }
  + \frac{1}{2} \scps{ L_1 u_0 }{ L_1 L_0^2 u_0 }
  + \frac{1}{8} \norm{ L_1^2 u_0 }^2
  \\
  - \frac{1}{4} \scps{ L_0 L_1 u_0 }{ L_1 L_0 u_0 }
  + \frac{1}{8} \norm{ L_1 L_0 u_0 }^2
  - \frac{1}{2} \scps{ L_1 u_0 }{ L_0 L_1 L_0 u_0 }
  + \frac{1}{8} \norm{ L_0 L_1 u_0 }^2.
\end{multline}
Rewriting the remaining terms yields
\begin{multline}
  - \scps{ L_0^2 u_0 }{ L_1^2 u_0 }
  + \frac{1}{8} \norm{L_0^2 u_0 }^2
  + \frac{1}{4} \scps{ L_0^2 u_0 }{ L_1^2 u_0 }
  - \frac{1}{2} \scps{ L_0^2 u_0 }{ L_1^2 u_0 }
  + \frac{1}{8} \norm{ L_1^2 u_0 }^2
  \\
  - \frac{1}{4} \scps{ L_0^2 u_0 }{ L_1^2 u_0 }
  + \frac{1}{8} \scps{ L_0^2 u_0 }{ L_1^2 u_0 }
  + \frac{1}{2} \scps{ L_0^2 u_0 }{ L_1^2 u_0 }
  + \frac{1}{8} \scps{ L_0^2 u_0 }{ L_1^2 u_0 }.
\end{multline}
Finally, these terms can be combined to
\begin{equation}
  + \frac{1}{8} \norm{L_0^2 u_0 }^2
  - \frac{3}{4} \scps{ L_0^2 u_0 }{ L_1^2 u_0 }
  + \frac{1}{8} \norm{ L_1^2 u_0 }^2,
\end{equation}
resulting in \eqref{eq:probably-stable-scheme-proof2}.

\appendix

\section*{Acknowledgements}

Research reported in this publication was supported by the King Abdullah
University of Science and Technology (KAUST).
The first author was partially supported by the German Research Foundation
(DFG, Deutsche Forschungsgemeinschaft) under Grant SO~363/14-1.

\printbibliography

\end{document}